\newtheorem{lemma}{Lemma}
\newtheorem{theorem}{Theorem}
\newtheorem{definition}{Definition}
\newtheorem{example}{Example}
\def\R{\mathbb R}
\def\N{\mathbb N}
\def\p{\partial}
\def\sup{\mathop{\rm sup}}
\def\supp{\mathop{\rm supp}}
\def\argmin{\mathop{\rm arg min}}
\def\ll{\left}
\def\rr{\right}
\newcommand{\norm}[1]{\left\|#1 \right\|}
\def\bar{\overline}
\begin{document}
\title{Solving an inverse problem for the wave equation by using a minimization algorithm and time-reversed measurements}
\author{Lauri Oksanen, 
University of Helsinki %, P.O. Box 68 FI-00014
}

%\date{\today}

\maketitle
\let\thefootnote\relax\footnotetext{MSC classes: 35R30}

%\tableofcontents

\vspace{1cm}
{\em Abstract.} 
We consider the inverse problem for the wave equation on a compact Riemannian manifold or on a bounded domain of $\R^n$,
and generalize the concept of  
{\em domain of influence}. 
We present an efficient minimization algorithm to compute the volume of a domain of influence 
using boundary measurements and time-reversed boundary measurements. Moreover, we show that if the manifold is simple, then the volumes of the domains of influence 
%for continuous functions $\tau$ 
determine the manifold. 
For a continuous real valued function $\tau$ on the boundary of the manifold,
the domain of influence is 
the set of those points on the manifold from which 
the travel time to some boundary point 
$y$ is less than $\tau(y)$.
%If $T$ is a time point and the support of a boundary source  %lies above the graph of $T - \tau$, then the domain of 
%influence is the 
%largest set where the wave field generated by the source
%can be supported at time $T$.

\begin{figure}[h]
\centering
\includegraphics[width=0.5\textwidth,clip]{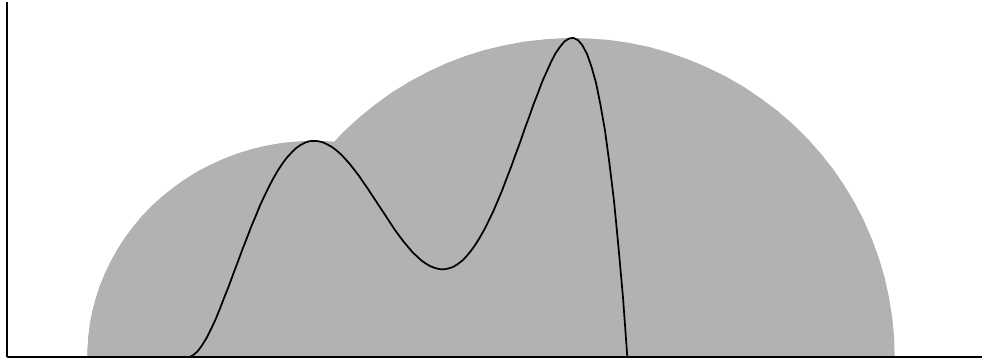}
\caption{The grey area depicts the domain of influence
and the black curve $\tau$. 
The horizontal axis is the boundary of the manifold,
and the vertical axis represents the direction of the inward pointing normal vector.
}
\end{figure}

\section{Introduction}

Let $M \subset \R^n$ be an open, bounded and connected set with a smooth boundary, and consider the wave equation
on $M$,
\begin{align}
\label{eq:wave_isotropic}
&\p_t^2 u(t,x) - c(x)^2 \Delta u(t,x) = 0, 
\quad &(t,x) \in (0, \infty) \times M,
\\&u(0,x) = 0,\ \p_t u(0,x) = 0,
\quad &x \in M, \nonumber
\\&\p_\nu u(t,x) = f(t,x), 
\quad &(t,x) \in (0, \infty) \times \p M, \nonumber
\end{align}
where $c$ is a smooth stricty positive function on $\bar M$, and $\p_\nu$ is the normal derivative on 
the boundary $\p M$.

Denote the solution of (\ref{eq:wave_isotropic}) by $u^f(t,x) = u(t,x)$, let $T > 0$, and define the operator 
\begin{equation}
\label{eq:dtn}
\Lambda_{2T} : f \mapsto u^f|_{(0,2T) \times \p M}.
\end{equation}
Operator $\Lambda_{2T}$ models boundary measurements and is  called the Neumann-to-Dirichlet operator. 
Let us assume that $c|_{\p M}$ is known but $c|_M$ is unknown.
The inverse problem for the wave equation is to reconstruct the wave speed $c(x)$, $x \in M$,
using the operator $\Lambda_{2T}$.

Let $\Gamma \subset \p M$ be open, $\tau \in C(\bar \Gamma)$, and
consider a wave source 
$f$ in $L^2((0,\infty) \times \p M)$ 
satisfying the support condition
\begin{equation}
\label{eq:source_supp_condition}
\supp(f) \subset \{ (t, y) \in [0, T] \times \bar \Gamma;\ t \in [T - \tau(y), T] \}.
\end{equation}
By the finite speed of progation for the wave equation \cite{Ga, Mi},
the solution $u^f$ satisfies then the support condition, 
\begin{equation}
\label{eq:sol_supp_condition}
\supp(u^f(T)) \subset \{x \in M;\ \text{there is $y \in \bar \Gamma$ such that $d(x,y) \le \tau(y)$}\},
\end{equation}
where $d(x,y)$ is the travel time between points $x$ and $y$, see (\ref{domain_of_influence}) below. Let us denote the set in (\ref{eq:sol_supp_condition}) by $M(\Gamma, \tau)$ and call it the domain of influence. 

The contribution of this paper is twofold.
First, we present a method to compute the volume of  $M(\Gamma, \min(\tau, T))$ using the operator $\Lambda_{2T}$.
The method works even when the wave speed is anisotropic, that is, when the wave speed is given by a Riemannian metric tensor $g(x) = (g_{jk}(x))_{j,k}^n$, $x \in \bar M$. 
In the case of the isotropic wave equation (\ref{eq:wave_isotropic}) we have $g(x) = (c(x)^{-2} \delta_{jk})_{j,k}^n$.

Second, assuming that the Riemannian manifold $(\bar M, g)$ is simple, we show that the volumes of $M(\Gamma, \tau)$
for $\tau \in C(\p M)$ contain enough information to 
determine the metric tensor $g$ up to a change of coordinates in $M$.
We recall the definition of a simple compact manifold below,
see Definition \ref{def:simple}.
In the case of the isotropic wave equation (\ref{eq:wave_isotropic}) we can
determine the wave speed $c$
in the Cartesian coordinates of $M$. 
%(assuming that $(\bar M, g)$ is simple).

%Let us consider these two topics separately.
Our method to compute the volume of 
the domain of influence
is a quadratic minimization scheme in $L^2((0, 2T) \times \p M)$ for the source 
$f$ satisfying the support condition (\ref{eq:source_supp_condition}).
%The minimization scheme is similar to that in \cite{ITRC}.
After a finite dimensional discretization,
%of $L^2((0, 2T) \times \p M)$, 
an approximate minimizer 
can be computed by solving
a positive definite system of linear equations. 
We show that the system can be solved very efficiently if 
we use an iterative method, such as the conjugate gradient method,
and intertwine measurements with computation.
In particular, instead of solving the equation (\ref{eq:wave_isotropic}) computationally in an iteration step, we measure $\Lambda_{2T} f$ 
for two sources $f$:
one is the approximate minimizer given by the previous iteration step and the other is related to the time-reversed version of the approximate minimizer, see (\ref{eq:cg_step_measurements}) below.
We believe that our intertwined algorithm is more robust against noise than an algorithm where noise is propagated by simulation of the wave equation.

Let us consider next the problem to determine the metric tensor $g$ given the volumes of $M(\p M, \tau)$ for all 
$\tau \in C(\p M)$. Our approach exploits the fact that
$C(\p M)$ is a lattice with the natural partial order 
\begin{equation}
\tau \le \sigma 
\quad \text{if and only if} \quad 
\tau(y) \le \sigma(y)\ \text{for all $y \in \p M$}.
\end{equation}
Let us define the greatest lower bound of $\tau$ and $\sigma$ in $C(M)$ as their pointwise minimum and
denote it by $\tau \wedge \sigma$.
We recall that a subset of $C(M)$ is a meet-semilattice if it is closed under the binary operation $\wedge$.

Let us define the {\em boundary distance functions},
\begin{equation}
\label{eq:boundary_distance_functions}
r_x : \p M \to [0, \infty), \quad r_x(y) := d(x,y),
\end{equation}
for $x \in \overline M$.
We show that the volumes of 
$M(\p M, \tau)$, $\tau \in C(\p M)$, determine the meet-semilattice, 
\begin{equation}
\label{eq:semilattice_QM_intro}
\overline{Q(M)} = \bigcup_{x \in \overline M}
\{ \tau \in C(M);\ \tau \le r_x \}.
\end{equation}
Moreover, we show that if $(\bar M, g)$ is simple 
then the boundary distance functions are the maximal elements of $\overline{Q(M)}$.
The set of boundary distance functions
determines the Riemannian manifold $(M,g)$ \cite{Ku_proc, KKL}. Thus the volumes of $M(\p M, \tau)$, $\tau \in C(\p M)$, determine $(M,g)$ if it is simple.

%geometric condition is satisfied:
%\begin{itemize}
%\item[(G)] If $x_1, x_2 \in M$ and $r_{x_1} \le r_{x_2}$, then $x_1 = x_2$.
%\end{itemize}. 
%Moreover, if  is simple then (G) holds.

Our results give a new uniqueness proof for the 
inverse problem for the wave equation in the case of
a simple geometry. Belishev and Kurylev have proved the uniqueness even when the geometry is not simple \cite{BeKu}. 
Their proof is based on the boundary control method
\cite{AKKLT, Be3, KK, KKLima, Pestov}, originally developed for the isotropic wave equation \cite{Be}. 
Our uniqueness proof might be the first systematic use of lattice structures in the context of inverse boundary value problems.
%In our opinion, the proof is refined, 
%but it uses the strong assumption that the geometry is simple. 

%Our definition of the domain of influence is new. 
In previous literature, $M(\Gamma, \tau)$ has been defined in the case of a constant function $\tau$, see e.g. \cite{KKL} and the references therein.
In this paper we establish some properties of $M(\Gamma, \tau)$ when $\Gamma \subset \p M$ is open and $\tau \in C(\bar \Gamma)$.
In particular, we show that its boundary is of measure zero. 
This important detail seems to be neglected in previous literature also in the case of a constant function $\tau$.
%We believe that $M(\Gamma, \tau)$ is a useful new concept in the study of hyperbolic inverse problems.

Our method to compute the volume of a domain of influence is related to the iterative time-reversal control method 
by Bingham, Kurylev, Lassas and Siltanen \cite{ITRC}.
Their method produces a certain kind of focused waves, 
and they also prove uniqueness for the 
inverse problem for the wave equation using these waves. Moreover, they give a review of 
methods that use time-reversed measurements
\cite{Bardos, Bardos2, Papa1, CIL, FinkD, FinkMain, Kliba}.  
A modification of the iterative time-reversal control method is presented in \cite{DKL}. 

\section{Main results}

Let $(M, g)$ be a $C^\infty$-smooth, 
compact and connected Riemannian manifold 
of dimension $n \ge 2$ with nonempty boundary $\p M$.
We consider the wave equation 
\begin{align}\label{eq:wave}
&\p_t^2 u(t,x) + a(x,D_x) u(t, x) = 0, \quad (t,x) \in (0,\infty) \times M,
\\\nonumber& u|_{t=0} = 0, \quad \p_t u|_{t=0}=0,  
\\\nonumber& b(x, D_x) u(t,x) = f(t,x), \quad (t,x) \in (0,\infty) \times \p M,
\end{align}
where $a(x, D_x)$ is a weighted Laplace-Beltrami operator and $b(x, D_x)$ is the 
corresponding normal derivative. 
In coordinates, $(g^{jk}(x))_{j,k=1}^n$ denotes the inverse of $g(x)$ and $|g(x)|$ the determinant of $g(x)$.
Then
\begin{align*}
a(x,D_x) u
&:= -\sum_{j,k=1}^n \mu(x)^{-1}|g(x)|^{-\frac 12}\frac {\p}{\p x^j} 
\ll( \mu(x)|g(x)|^{\frac 12}g^{jk}(x)\frac {\p u}{\p x^k} \rr),
\\b(x, D_x) u 
&:= \sum_{j,k=1}^n \mu(x)g^{jk}(x) \nu_k(x) \frac{\p u}{\p x^j},
\end{align*}
where $\mu$ is a $C^\infty$-smooth strictly positive weight function and 
$\nu = (\nu_1, \dots, \nu_n)$ is the exterior co-normal vector of $\p M$
normalized with respect to $g$, that is $\sum_{j,k=1}^m g^{jk}\nu_j\nu_k=1$.
The isotropic wave equation (\ref{eq:wave_isotropic}) is a special case of (\ref{eq:wave})
with $g(x) := (c(x)^{-2} \delta_{jk})_{j,k=1}^n$ and $\mu(x) = c(x)^{n-2}$.

We denote the indicator function of a set $A$ by $1_A$,
that is, $1_A(x) = 1$ if $x \in A$ and $1_A(x) = 0$
otherwise. 
Moreover, we denote
\begin{equation} \label{eq:integration_triangle_L}
L := \{ (t,s) \in \R^2; t + s \le 2 T,\ s > t > 0 \},
\end{equation}
and define the operators
\begin{align*}
&J f(t) := \frac{1}{2} \int_0^{2 T} 1_L(t,s) f(s) ds, \quad
R f(t) := f(2 T - t), \quad
\\&K := J \Lambda_{2T} - R \Lambda_{2T} R J, \quad
I f (t) := 1_{(0,T)}(t) \int_0^t f(s) ds,
\end{align*}
where $\Lambda_{2T}$ is the operator defined by (\ref{eq:dtn}) $u^f$ being the solution of (\ref{eq:wave}).
We denote by $dS_g$ the Riemannian volume measure of the manifold $(\p M, g|_{\p M})$.
Furthermore, we denote by $(\cdot, \cdot)$ and $\norm{\cdot}$ 
the inner product and the norm of $L^2((0, 2T) \times \p M; dt \otimes dS_g)$.
We study the regularized minimization problem
\begin{equation}
\label{eq:minimization_regularized}
\argmin_{f \in S} \ll ((f, K f) - 2(I f, 1) + \alpha \norm{f}^2 \rr),
\end{equation}
where the regularization parameter $\alpha$ is strictly positive and $S$
is a closed subspace of $L^2((0, 2T) \times \p M)$.

Operator $a(x, D_x)$ with the domain $H^2(M) \cap H^1_0(M)$ is self-adjoint on
the space $L^2(M; dV_\mu)$, where $dV_\mu = \mu |g|^{1/2} dx$ in coordinates.
Thus we call $dV_\mu$ the natural measure corresponding to $a(x, D_x)$
and denote it also by $m$.
In \cite{ITRC} it is shown that
\begin{equation}
\label{eq:inner_products}
(u^f(T), u^h(T))_{L^2(M; dV_\mu)} 
= (f, K h).
\end{equation}
This is a reformulation of the Blagovestchenskii identity \cite{Bl}.
In Lemma \ref{lem:cross_term} we show the following  identity
\begin{equation}
\label{eq:inner_product_with_1}
(u^f(T), 1)_{L^2(M; dV_\mu)} = (I f, 1).
\end{equation}
This is well known at least in the isotropic case, see e.g. \cite{Be2}.
The equations (\ref{eq:inner_products}) and (\ref{eq:inner_product_with_1}) imply that
\begin{align}
\label{eq:minimization}
(f, K f) - 2(I f, 1)
&= \norm{u^{f}(T)}_{L^2(M; dV_\mu)}^2 - 2 (u^{f}(T), 1)_{L^2(M; dV_\mu)} 
\\\nonumber&=
\norm{u^{f}(T) - 1}_{L^2(M; dV_\mu)}^2 + C,
\end{align}
where $C = -\norm{1}_{L^2(M; dV_\mu)}^2$ does not depend on the source $f$.
Thus the minimization problem (\ref{eq:minimization_regularized}) is equivalent with the 
minimization problem 
\begin{equation*}
\argmin_{f \in S} \ll( \norm{u^{f}(T) - 1}_{L^2(M; dV_\mu)}^2 + \alpha \norm{f}^2 \rr).
\end{equation*}

For $\Gamma \subset M$ and $\tau : \bar \Gamma \to \R$, we define {\em the domain of influence},
\begin{equation}
\label{domain_of_influence}
M(\Gamma, \tau) := \{x \in M;\ \text{there is $y \in \bar \Gamma$ such that $d(x,y) \le \tau(y)$}\},
\end{equation}
where $d$ is the distance on the Riemannian manifold $(M,g)$.
In Section \ref{sec:regularization} we show the following two theorems.
\begin{theorem}
\label{thm:minimization_on_subspace}
Let $\alpha > 0$ and let $S \subset L^2( (0,2T) \times \p M)$ be a closed subspace.
Denote by $P$ the orthogonal projection
\begin{equation*}
P : L^2( (0,2T) \times \p M) \to S.
\end{equation*}
Then the regularized minimization (\ref{eq:minimization_regularized}) has unique minimizer $f_\alpha \in S$, 
and $f_\alpha$ is the unique $f \in S$ solving
\begin{equation}
\label{eq:normal}
(PKP + \alpha) f = P I^+ 1,
\end{equation}
where $I^+$ is the adjoint of $I$ in $L^2( (0,2T) \times \p M)$.
Moreover, $PKP + \alpha$ is positive definite on $S$.
\end{theorem}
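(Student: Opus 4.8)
The plan is to show that the quadratic functional in \eqref{eq:minimization_regularized}, restricted to $S$, is strictly convex and coercive, so that a unique minimizer exists and is characterized by the vanishing of its first variation. First I would observe that by \eqref{eq:minimization} we have $(f, Kf) - 2(If,1) = \norm{u^f(T)}^2_{L^2(M;dV_\mu)} + C$ with $C$ independent of $f$, and in particular the map $f \mapsto (f,Kf)$ agrees with $f \mapsto \norm{u^f(T)}^2_{L^2(M;dV_\mu)}$, which is manifestly nonnegative; hence $K$ is a bounded, self-adjoint, positive semidefinite operator on $L^2((0,2T)\times\p M)$. Consequently, for any closed subspace $S$ with orthogonal projection $P$, the operator $PKP$ is bounded, self-adjoint and positive semidefinite on $S$, so $PKP + \alpha$ is bounded, self-adjoint, and satisfies $((PKP+\alpha)f, f) \ge \alpha\norm{f}^2$ for all $f \in S$; this gives positive definiteness on $S$. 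By Lax--Milgram (or simply the Riesz representation theorem applied to the coercive, bounded, symmetric bilinear form $(f,h) \mapsto ((PKP+\alpha)f,h)$ on the Hilbert space $S$), the equation $(PKP+\alpha)f = PI^+1$ has a unique solution $f_\alpha \in S$.

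Next I would connect this linear equation to the minimization problem. Writing $F(f) := (f,Kf) - 2(If,1) + \alpha\norm{f}^2$ for $f \in S$ and using that $f \in S$ so $Pf = f$, we have $(f,Kf) = (f, PKPf)$ and $(If,1) = (f, I^+1) = (f, PI^+1)$; thus $F(f) = ((PKP+\alpha)f, f) - 2(f, PI^+1)$. Completing the square with the self-adjoint positive definite operator $A := PKP+\alpha$ on $S$, one gets $F(f) = (A(f - f_\alpha), f - f_\alpha) - (Af_\alpha, f_\alpha)$ where $f_\alpha = A^{-1}PI^+1$; since $A$ is positive definite on $S$, $F(f) \ge -(Af_\alpha, f_\alpha)$ with equality if and only if $f = f_\alpha$. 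This shows simultaneously that the minimizer exists, is unique, equals $f_\alpha$, and coincides with the solution of \eqref{eq:normal}.

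The two points requiring a little care are the identity $(If,1) = (f, PI^+1)$ for $f \in S$ and the self-adjointness of $K$. For the former, note $(If,1) = (f, I^+1)$ by definition of the adjoint $I^+$ on $L^2((0,2T)\times\p M)$, and since $f = Pf$ and $P$ is an orthogonal projection, $(f, I^+1) = (Pf, I^+1) = (f, PI^+1)$. For the self-adjointness of $K$ on $L^2((0,2T)\times\p M)$: the identity \eqref{eq:inner_products} shows that $(f,Kh) = (u^f(T), u^h(T))_{L^2(M;dV_\mu)}$ for all admissible $f,h$, which is symmetric in $f$ and $h$, so $K = K^*$ and $K \ge 0$ on the relevant space; one should remark that $\Lambda_{2T}$ maps into the appropriate $L^2$ space so that all these operators are bounded on $L^2((0,2T)\times\p M)$ and the manipulations are legitimate.

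The main obstacle, such as it is, is purely a matter of bookkeeping rather than depth: one must make sure that $K$ is genuinely bounded and self-adjoint on all of $L^2((0,2T)\times\p M)$ (not merely on a dense subspace), which rests on the well-posedness and boundary regularity of the wave equation \eqref{eq:wave} that justify \eqref{eq:inner_products}; given that, the strict convexity, existence, uniqueness, the normal equation, and the positive definiteness of $PKP+\alpha$ all follow from standard Hilbert space arguments with the coercivity coming entirely from the $\alpha\norm{f}^2$ term.
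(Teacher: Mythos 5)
Your proof is correct, but it takes a genuinely different route from the paper's. You exploit the fact that the functional is exactly quadratic: after checking that $K$ is bounded, self-adjoint and positive semidefinite on $L^2((0,2T)\times \p M)$ (all of which follow from the identity (\ref{eq:inner_products})), you observe that $A := PKP+\alpha$ is coercive on $S$ with constant $\alpha$, solve the normal equation (\ref{eq:normal}) by Lax--Milgram, and complete the square to get existence, uniqueness and the identification of the minimizer in one stroke. The paper instead runs the direct method of the calculus of variations: it takes a minimizing sequence, extracts a weakly convergent subsequence, and upgrades the convergence of the data-fit term $\norm{u^{f_j}(T)-1}_{L^2(M;dV_\mu)}^2$ to a genuine limit by invoking the compactness of the control-to-state map $h\mapsto u^h(T)$, which rests on the Lasiecka--Triggiani regularity result that this map is bounded into $H^{5/6-\epsilon}(M)$; it then derives (\ref{eq:normal}) from the vanishing of the Fr\'echet derivative and gets uniqueness from positive definiteness of $PKP+\alpha$, just as you do. Your argument is shorter and needs strictly less input --- no compactness, only the boundedness and symmetry of $K$ encoded in (\ref{eq:inner_products}) --- whereas the paper's weak-compactness argument is the one that would survive a non-quadratic perturbation of the functional. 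The one point you rightly flag, and should not gloss over, is that $K$ must be bounded and self-adjoint on all of $L^2((0,2T)\times\p M)$ and not merely on a dense subspace; this is exactly the same well-posedness input the paper relies on when it writes $E(f)=\norm{u^f(T)-1}_{L^2(M;dV_\mu)}^2-\norm{1}_{L^2(M;dV_\mu)}^2+\alpha\norm{f}^2$ for general $f\in S$, so no gap is introduced.
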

\begin{theorem}
\label{thm:indicator_functions}
Let $\Gamma \subset \p M$ be open, $\tau \in C(\bar \Gamma)$ and define
\begin{equation*}
S = \{f \in L^2((0, 2T) \times \p M);\ \text{$\supp(f)$ satisfies (\ref{eq:source_supp_condition})}\}.
\end{equation*}
Let $f_\alpha$, $\alpha > 0$, 
be the minimizer in Theorem \ref{thm:minimization_on_subspace}.
Then in $L^2(M)$
\begin{equation*}
\lim_{\alpha \to 0} u^{f_\alpha}(T) 
= 1_{M(\Gamma, \tau \wedge T)}.
\end{equation*}
\end{theorem}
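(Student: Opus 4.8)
The plan is to identify the minimization as a projection problem in the Hilbert space $H := L^2(M;dV_\mu)$ and then to pass $\alpha \to 0$. First I would invoke the reformulation (\ref{eq:minimization}), which shows that $f_\alpha$ minimizes $\norm{u^f(T)-1}_H^2 + \alpha\norm{f}^2$ over $f\in S$. The key structural fact is the characterization of the closure of the ``attainable set'' $\{u^f(T) : f \in S\}$ in $H$. I claim this closure is exactly $L^2(M(\Gamma,\tau\wedge T))$, viewed as a subspace of $H$ via extension by zero. The inclusion $\subset$ is the finite-speed-of-propagation statement (\ref{eq:sol_supp_condition}) combined with the truncation by $T$ coming from the fact that $u^f(T)$ only ``sees'' the waves up to time $T$, i.e. effectively $\tau$ is replaced by $\tau\wedge T$; here one uses that $\p M(\Gamma,\tau\wedge T)$ has measure zero (a property the paper advertises it will establish) so that $1_{M(\Gamma,\tau\wedge T)}$ is well defined up to null sets and the subspace is closed. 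The reverse inclusion, i.e. approximate controllability — that every $L^2$ function supported in $M(\Gamma,\tau\wedge T)$ is an $H$-limit of waves $u^f(T)$ with $f\in S$ — is the deep ingredient; this is a sharp unique-continuation / Tataru-type result (exact controllability in sharp time with the triangular support constraint (\ref{eq:source_supp_condition})), which I would cite from the boundary-control literature (\cite{KKL}, \cite{ITRC}) rather than reprove.

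Granting that the closure of the attainable set is $V := L^2(M(\Gamma,\tau\wedge T)) \subset H$, the argument is then a soft functional-analytic limit. Let $w_\alpha := u^{f_\alpha}(T)\in H$. Since $f_\alpha$ is the Tikhonov-regularized solution of ``$u^f(T)=1$'', standard regularization theory gives that $w_\alpha \to P_V 1$ in $H$ as $\alpha\to 0$, where $P_V$ is the orthogonal projection onto the closed subspace $V$. Indeed, comparing with any fixed $f\in S$ one gets $\norm{w_\alpha-1}_H^2 + \alpha\norm{f_\alpha}^2 \le \norm{u^f(T)-1}_H^2 + \alpha\norm{f}^2$; taking $f$ so that $u^f(T)$ is close to $P_V 1$ and then letting $\alpha\to 0$ yields $\limsup_\alpha \norm{w_\alpha-1}_H \le \norm{P_V 1 - 1}_H = \operatorname{dist}(1,V)$, while $w_\alpha\in V$ forces $\norm{w_\alpha-1}_H\ge \operatorname{dist}(1,V)$; combining and using that the closest point in a closed subspace is unique gives $w_\alpha\to P_V 1$.

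It remains to compute $P_V 1$. Since $V = L^2(M(\Gamma,\tau\wedge T))$ sitting inside $H=L^2(M;dV_\mu)$ as functions vanishing off $M(\Gamma,\tau\wedge T)$, the orthogonal projection is simply multiplication by the indicator, i.e. $P_V 1 = 1_{M(\Gamma,\tau\wedge T)}$ — here again one uses that the boundary of $M(\Gamma,\tau\wedge T)$ is $m$-null so that this indicator is the honest projection and no boundary layer contributes. Hence $u^{f_\alpha}(T)\to 1_{M(\Gamma,\tau\wedge T)}$ in $H$, and since $dV_\mu$ is comparable to Lebesgue measure (the weight $\mu|g|^{1/2}$ is smooth and strictly positive), convergence in $H$ is equivalent to convergence in $L^2(M)$, which is the claim.

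The main obstacle is the approximate-controllability half of the attainable-set identification: one must know that waves generated by sources obeying the triangular support condition (\ref{eq:source_supp_condition}) can approximate, in $L^2(M;dV_\mu)$, an arbitrary function supported in $M(\Gamma,\tau\wedge T)$. This is exactly where sharp unique continuation for the wave operator (Tataru's theorem) enters, and it is the only genuinely hard input; everything else — the Blagovestchenskii reformulation, the Tikhonov limit, and the computation of the projection — is routine once the measure-zero property of $\p M(\Gamma,\tau\wedge T)$ is in hand.
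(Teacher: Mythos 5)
Your functional-analytic skeleton --- rewrite the functional via the Blagovestchenskii identity, recognize $f_\alpha$ as a Tikhonov-regularized solution, show $u^{f_\alpha}(T)$ converges to the orthogonal projection of $1$ onto the closure $V$ of the attainable set, and compute that projection as $1_{M(\Gamma,\tau\wedge T)}$ using the null boundary --- is correct and is essentially the paper's argument in disguise: the identity (\ref{eq:splitting_the_difference_1}), $\norm{u^{f}(T)-1}^2 = \norm{u^{f}(T)-1_{M(\Gamma,\tau)}}^2 + m(M\setminus M(\Gamma,\tau))$, is exactly your Pythagorean decomposition, and the final chain of inequalities is your comparison $E(f_\alpha)\le E(f)$ for a well-chosen competitor $f$.

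The gap is in your identification of $V$ with $L^2(M(\Gamma,\tau\wedge T))$. You propose to cite approximate controllability ``with the triangular support constraint (\ref{eq:source_supp_condition})'' from \cite{KKL} and \cite{ITRC}, but those references only give density of $\{u^f(T)\}$ in $L^2(M^0(\Gamma,\tau))$ when $\tau$ is a \emph{constant} or, by an easy extension, a finite sum $\sum_j T_j 1_{\Gamma_j}$; there is no citable statement for general $\tau\in C(\bar\Gamma)$. Bridging from simple to continuous $\tau$ is precisely the new content of the paper's proof: one approximates $\tau$ from below by simple functions $\tau_\delta$ (Lemma \ref{lem:approximation_by_simple}), shows $m(M(\Gamma,\tau_\delta))\to m(M(\Gamma,\tau))$ (Lemma \ref{lem:measures_by_approximation}), and this in turn requires knowing that $M(\Gamma,\tau)\setminus M^0(\Gamma,\tau)$ is a null set (Lemmas \ref{lem:characterization_of_domi} and \ref{lem:level_set_is_null}) --- a fact the paper stresses is absent from the literature even for constant $\tau$. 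You correctly flag the null-boundary issue, but you deploy it only to identify the projection; you miss that it is also needed to upgrade the density from $L^2(M^0)$ to $L^2(M(\Gamma,\cdot))$, and you omit the approximation-by-simple-functions step entirely. Note finally that the theorem does not need the full density of the attainable set in $L^2(M(\Gamma,\tau\wedge T))$: it suffices that the single function $1_{M(\Gamma,\tau)}$ lies in its closure, which is what the volume-convergence argument delivers directly (full density then also follows by absolute continuity of the integral, but is never used).
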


We denote $m_\tau := m(M(\p M, \tau))$, for $\tau \in C(\p M)$, and $m_\infty := m(M)$.
Moreover, we define
\begin{align}
  Q(M) &:= \{ \tau \in C(\p M);\ m_\tau < m_\infty \}, \label{eq:meet_semilattice_QM}
\\R(M) &:= \{ r_x \in C(\p M);\ x \in M \}, \nonumber
\end{align}
where $r_x$ is the boundary distance function defined by 
(\ref{eq:boundary_distance_functions}) $d$ being the distance on the Riemannian manifold $(M,g)$.
We denote by $\overline{Q(M)}$ the closure of $Q(M)$ in $C(M)$.
In Section \ref{sec:maximal_elements} we prove the 
equation (\ref{eq:semilattice_QM_intro}) and show
the following theorem.
\begin{theorem}
\label{thm:maximal_elements}
If $(M,g)$ satisfies the condition 
\begin{itemize}
\item[(G)] $x_1, x_2 \in M$ and $r_{x_1} \le r_{x_2}$ imply $x_1 = x_2$,
\end{itemize}
then $R(M)$ is the set of maximal elements of $\overline{Q(M)}$.
\end{theorem}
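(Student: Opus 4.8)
\emph{Proof proposal.} The plan is to deduce this from the structural identity (\ref{eq:semilattice_QM_intro}), after which only an elementary order argument remains, with hypothesis (G) entering at a single point. Recall that (\ref{eq:semilattice_QM_intro}) exhibits $\overline{Q(M)}$ as the down-set generated by the family $R(M) = \{ r_x;\ x \in M \}$ in the pointwise-ordered lattice $C(\p M)$: every $\tau \in \overline{Q(M)}$ satisfies $\tau \le r_x$ for some $x \in M$, and conversely every such $\tau$ lies in $\overline{Q(M)}$. (As $M$ is compact, the index set $\overline M$ appearing in (\ref{eq:semilattice_QM_intro}) is just $M$.) In particular, applying this with $\tau = r_x$ shows $r_x \in \overline{Q(M)}$ for each $x \in M$, a fact I will use twice.

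First I would verify that each $r_x$, $x \in M$, is maximal: if $\tau \in \overline{Q(M)}$ and $r_x \le \tau$, then by (\ref{eq:semilattice_QM_intro}) there is $x' \in M$ with $\tau \le r_{x'}$, hence $r_x \le r_{x'}$, and condition (G) forces $x' = x$; so $\tau \le r_x$ and thus $\tau = r_x$. Hence no element of $\overline{Q(M)}$ strictly exceeds $r_x$, i.e.\ $r_x$ is maximal, and $R(M)$ is contained in the set of maximal elements. Conversely, if $\tau$ is maximal in $\overline{Q(M)}$, then by (\ref{eq:semilattice_QM_intro}) $\tau \le r_x$ for some $x \in M$, and since $r_x \in \overline{Q(M)}$, maximality of $\tau$ gives $\tau = r_x \in R(M)$. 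This second inclusion uses only (\ref{eq:semilattice_QM_intro}), not (G); the role of (G) is precisely to ensure that every $r_x$ is genuinely maximal. Together the two inclusions prove that $R(M)$ is exactly the set of maximal elements of $\overline{Q(M)}$.

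Granted (\ref{eq:semilattice_QM_intro}), there is essentially no obstacle here; the only points requiring attention are that $r_x$ itself lies in $\overline{Q(M)}$ and that the two index sets agree. The substance of the section is (\ref{eq:semilattice_QM_intro}) itself --- and behind it Theorem \ref{thm:indicator_functions}, which turns the measured numbers $m_\tau$ into geometric information. If I had to reprove (\ref{eq:semilattice_QM_intro}): the inclusion of $Q(M)$ into $\{\tau;\ \exists x \in M,\ \tau \le r_x\}$ holds because $m_\tau < m_\infty$ forces $M \setminus M(\p M, \tau)$ to have positive measure, and for $z$ in this set ``$z \notin M(\p M, \tau)$'' means exactly $\tau(y) < r_z(y)$ for all $y \in \p M$, i.e.\ $\tau \le r_z$; passing to the closure then uses that $\{\tau;\ \exists x \in M,\ \tau \le r_x\}$ is closed, which follows from compactness of $M$ and the fact that $x \mapsto r_x$ is $1$-Lipschitz from $(M,d)$ into $(C(\p M), \norm{\cdot}_\infty)$. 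For the reverse inclusion I would approximate any $\tau$ with $\tau \le r_x$ from below by the functions $\tau - \varepsilon$, $\varepsilon \downarrow 0$: for $z$ in the positive-measure set $\{z \in M;\ d(z,x) < \varepsilon\}$ one has $r_z(y) \ge r_x(y) - d(z,x) > \tau(y) - \varepsilon$ for every $y$, whence $z \notin M(\p M, \tau - \varepsilon)$ and so $\tau - \varepsilon \in Q(M)$.
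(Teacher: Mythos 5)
Your proposal is correct and follows essentially the same route as the paper: the order-theoretic argument reproduces the proof of Lemma \ref{lem:maximal_elements} (maximality of each $r_x$ from (G), and every maximal element being some $r_x$ since $r_x$ itself lies in the set), while your sketch of (\ref{eq:semilattice_QM_intro}) --- closedness via compactness of $M$ and continuity of $x \mapsto r_x$, density via the approximants $\tau - \varepsilon$ and a small ball about $x$ --- is exactly the content of Lemma \ref{lem:closure} combined with the characterization (\ref{eq:characterization_of_Q}). The only cosmetic difference is that you invoke the $1$-Lipschitz property of $x \mapsto r_x$ where the paper uses uniform continuity of $d$ on the compact set $M \times \p M$.
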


Let $T \ge \max\{ d(x, y) ;\ x \in M,\ y \in \p M \}$.
Then the set of volumes,
\begin{equation}
\label{eq:volumes_for_uniqueness}
\mathcal V := \{m_\tau;\ \tau \in C(\p M)\ \text{and $0 \le \tau \le T$} \},
\end{equation}
determines the set $Q(M)$. Note that $r_x(y) \le T$, for all $x \in M$ and all $y \in \p M$, and that $m_\infty = \max \mathcal V$.
Moreover, by Theorem \ref{thm:indicator_functions} and equation (\ref{eq:inner_products}) we can compute the volume $m_\tau \in \mathcal V$ as the limit 
\begin{equation}
\label{eq:volumes_via_minimizers}
m_\tau = \lim_{\alpha \to 0} (f_\alpha, K f_\alpha).
\end{equation}
The set $R(M)$ determines the manifold $(M,g)$ up to an isometry \cite{Ku_proc, KKL}.
Hence the volumes (\ref{eq:volumes_for_uniqueness}) contain enough information to 
determine the manifold $(M,g)$ in the class of manifolds satisfying (G).
In section \ref{sec:maximal_elements}, we show that simple manifolds satisfy (G).
\begin{definition}
\label{def:simple}
A compact Riemannian manifold $(M, g)$ with boundary is {\em simple}
if it is simply connected, any geodesic has no conjugate points and
$\p M$ is strictly convex with respect to the metric $g$.
\end{definition}
%Any two points of a simple manifold can be joined by
%a unique geodesic.

Let us discuss Theorem \ref{thm:minimization_on_subspace} from the point of view of practical computations.
When the subspace $S$ is finite-dimensional, the positive definite system of linear equations (\ref{eq:normal})
can be solved using the conjugate gradient method.
In each iteration step of the conjugate gradient method we must evaluate one matrix-vector product. 
In our case, the product can be realized by two measurements 
\begin{equation}
\label{eq:cg_step_measurements}
\Lambda_{2 T}f, \quad \Lambda_{2 T} R J f, 
\end{equation}
where $f$ is the approximate solution given by the previous iteration step. 
The remaining computational part of the iteration step consists of a few inexpensive vector-vector operations. 
Thus if we intertwine computation of a conjugate gradient steps with measurements (\ref{eq:cg_step_measurements}), 
the computational cost of our method is very low. 
%Moreover, the noise in measument (\label{eq:cg_step_measurements}) is likely to be independent between steps.

\section{The open and the closed domain of influence}
\label{sec:domains_of_influence}

Let us recall that the domain of influence $M(\Gamma, \tau)$ 
is defined in (\ref{domain_of_influence}) for $\Gamma \subset M$ and $\tau : \bar \Gamma \to \R$.
We call $M(\Gamma, \tau)$ also the {\em closed} domain of influence and
define the {\em open} domain of influence
\begin{equation*}
M^0(\Gamma, \tau) := \{ x \in M;\ \text{there is $y \in \Gamma$ s.t. $d(x,y) < \tau(y)$} \}.
\end{equation*}

Let us consider the closed domain of influence $M(\Gamma, \tau)$ when $\Gamma \subset \p M$ is open and $\tau$ is a constant.
Finite speed of propagation for the wave equation guarantees that the solution $u^f$ at time $T$
is supported on $M(\Gamma, \tau)$ whenever the source $f$ satisfies the support condition 
(\ref{eq:source_supp_condition}).
Moreover, using Tataru's unique continuation result \cite{Ta1, Ta2}, it is possible to show that the set of functions,
\begin{equation*}
\{ u^f(T);\ \text{$f \in L^2((0,2T) \times \p M)$ and $\supp(f)$ satisfies (\ref{eq:source_supp_condition})}\},
\end{equation*}
is dense in $L^2(M^0(\Gamma, \tau))$, see e.g. the proof of Theorem 3.16 and the orthogonality argument of Theorem 3.10 in \cite{KKL}.
It is easy generalize this for $\tau$ of form
\begin{equation*}
\tau(y) = \sum_{j=1}^N T_j 1_{\Gamma_j}(y), \quad y \in \p M,
\end{equation*}
where $N \in \N$, $T_j \in \R$ and $\Gamma_j \subset \p M$ are open, see \cite{ITRC}.
However, the fact that $M(\Gamma, \tau) \setminus M^0(\Gamma, \tau)$ is of measure zero, seems to go unproven in the literature.
In this section we show that this is indeed the case even for $\tau \in C(\bar \Gamma)$.

To our knowledge, this can not be proven just by considering the boundaries of the balls $B(y, \tau(y))$, 
$y \in \Gamma$.
In fact, we give below an example showing that the union of the boundaries $\p B(y, \tau(y))$, for $y \in \p \Gamma$,
can have positive measure.
\begin{example}
Let $\mathcal C$ be the fat Cantor set, $M \subset \R^2$ be open, $g$ be the Euclidean metric, 
$(0,1) \times \{0\} \subset \p M$ and $\Gamma = \ll( (0,1) \setminus \mathcal C \rr) \times \{0\}$.
Then the union $B := \bigcup_{y \in \p \Gamma} \p B(y, 1)$ has positive measure. 
\end{example}
\begin{proof}
The fat Cantor set $\mathcal C$ is an example of a closed subset of $[0,1]$ whose boundary has positive measure, see e.g. \cite{Pugh}.
The map
\begin{equation*}
\Phi : (s, \alpha) \mapsto (s + \cos \alpha, s + \sin \alpha)
\end{equation*}
is a diffeomorphism from $\R \times (0, \pi/2)$ onto its image in $\R^2$.
The image of $H := \p \mathcal C \times (0, \pi/2)$ under $\Phi$
lies in $B$.
As $H$ has positive measure so has $B$.
\end{proof}

\begin{lemma}
\label{lem:characterization_of_domi}
Let $\Gamma \subset \p M$ be open and let $\tau \in C(\bar \Gamma)$. 
Then the function 
\begin{equation*}
r_{\Gamma, \tau}(x) := \inf_{y \in \Gamma} (d(x, y) - \tau(y))
\end{equation*}
is Lipschitz continuous and 
\begin{align}
M(\Gamma, \tau) &= \{ x \in M;\ r_{\Gamma, \tau}(x) \le 0 \}, \label{eq:closed_domi_r}
\\M^0(\Gamma, \tau) &= \{ x \in M;\ r_{\Gamma, \tau}(x) < 0 \}. \label{eq:open_domi_r}
\end{align}
In particular, $M(\Gamma, \tau)$ is closed and $M^0(\Gamma, \tau)$ is open.
\end{lemma}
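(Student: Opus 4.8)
The plan is to prove Lipschitz continuity of $r_{\Gamma,\tau}$ first, and then deduce the two identities (\ref{eq:closed_domi_r}) and (\ref{eq:open_domi_r}) from it, with the closedness and openness claims following immediately since $r_{\Gamma,\tau}$ is continuous.

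For the Lipschitz estimate, fix $x_1, x_2 \in M$. For each $y \in \Gamma$ the triangle inequality on $(M,g)$ gives $d(x_1,y) \le d(x_1,x_2) + d(x_2,y)$, hence $d(x_1,y) - \tau(y) \le d(x_1,x_2) + (d(x_2,y) - \tau(y))$. Taking the infimum over $y \in \Gamma$ on the right and then on the left yields $r_{\Gamma,\tau}(x_1) \le d(x_1,x_2) + r_{\Gamma,\tau}(x_2)$; by symmetry $|r_{\Gamma,\tau}(x_1) - r_{\Gamma,\tau}(x_2)| \le d(x_1,x_2)$, so $r_{\Gamma,\tau}$ is $1$-Lipschitz with respect to $d$, hence Lipschitz (and in particular continuous) in the manifold topology. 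Note that this argument never uses continuity of $\tau$; the role of $\tau \in C(\bar\Gamma)$ is to make $\tau$ bounded on $\bar\Gamma$, which together with boundedness of $d$ on the compact $M \times \p M$ ensures $r_{\Gamma,\tau}$ is real-valued (the infimum is finite and not $-\infty$).

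For (\ref{eq:closed_domi_r}): by definition $x \in M(\Gamma,\tau)$ iff there is $y \in \bar\Gamma$ with $d(x,y) \le \tau(y)$, i.e. $d(x,y) - \tau(y) \le 0$ for some $y \in \bar\Gamma$. The one subtlety is that $M(\Gamma,\tau)$ is defined with $y$ ranging over $\bar\Gamma$ while $r_{\Gamma,\tau}$ takes the infimum over the open set $\Gamma$. If $r_{\Gamma,\tau}(x) \le 0$, choose $y_k \in \Gamma$ with $d(x,y_k) - \tau(y_k) \to r_{\Gamma,\tau}(x) \le 0$; by compactness of $\bar\Gamma$ pass to a subsequence $y_k \to y \in \bar\Gamma$, and by continuity of $d(x,\cdot)$ and of the continuous extension of $\tau$ to $\bar\Gamma$ we get $d(x,y) - \tau(y) \le 0$, so $x \in M(\Gamma,\tau)$. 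Conversely, if $x \in M(\Gamma,\tau)$ with witness $y \in \bar\Gamma$, approximate $y$ by points $y_k \in \Gamma$ (using that $\Gamma$ is dense in $\bar\Gamma$) and pass to the limit to get $r_{\Gamma,\tau}(x) \le d(x,y) - \tau(y) \le 0$; equivalently one observes $r_{\Gamma,\tau}(x) = \inf_{y \in \bar\Gamma}(d(x,y) - \tau(y))$ by continuity, which makes the equality transparent.

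For (\ref{eq:open_domi_r}): if $r_{\Gamma,\tau}(x) < 0$ then, since the infimum over $\Gamma$ is strictly negative, there exists $y \in \Gamma$ with $d(x,y) - \tau(y) < 0$, i.e. $x \in M^0(\Gamma,\tau)$. Conversely, such a $y \in \Gamma$ gives $r_{\Gamma,\tau}(x) \le d(x,y) - \tau(y) < 0$. The last two sentences of the lemma are then immediate: $M(\Gamma,\tau) = r_{\Gamma,\tau}^{-1}((-\infty,0])$ is closed and $M^0(\Gamma,\tau) = r_{\Gamma,\tau}^{-1}((-\infty,0))$ is open, both by continuity of $r_{\Gamma,\tau}$. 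The main obstacle — really the only point requiring care — is the mismatch between the $\bar\Gamma$ in the definition of $M(\Gamma,\tau)$ and the $\Gamma$ in the infimum defining $r_{\Gamma,\tau}$, which is handled by the compactness-plus-density argument above; everything else is routine.
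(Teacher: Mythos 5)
Your proposal is correct and follows essentially the same route as the paper: both handle the mismatch between the infimum over $\Gamma$ and the definition of $M(\Gamma,\tau)$ over $\bar\Gamma$ by combining density of $\Gamma$ in $\bar\Gamma$ with compactness and continuity, which amounts to the paper's identity $r_{\Gamma,\tau}(x)=\min_{y\in\bar\Gamma}(d(x,y)-\tau(y))$. The only cosmetic difference is that you prove the Lipschitz bound directly from the triangle inequality applied under the infimum, while the paper uses the attained minimizer $y_0\in\bar\Gamma$; both yield the same $1$-Lipschitz estimate.
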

\begin{proof}
Let us define
\begin{equation*}
r(x) := r_{\Gamma, \tau}(x), \quad \tilde r(x) := \min_{y \in \bar \Gamma} (d(x, y) - \tau(y)),
\end{equation*}
and show that $\tilde r = r$. Clearly $\tilde r\le r$.
Let $x \in M$. The minimum in the definition of $\tilde r(x)$ is attained at a point $y_0 \in \bar \Gamma$.
We may choose a sequence $(y_j)_{j=1}^\infty \subset \Gamma$ such that $y_j \to y_0$ as $j \to \infty$.
Then
\begin{equation*}
r(x) \le d(x, y_j) - \tau(y_j) \to \tilde r(x)
\quad \text{as $j \to \infty$}.
\end{equation*}
Hence $\tilde r = r$.

Let us show that $\tilde r$ is Lipschitz. 
Let $x \in M$, and let $y_0$ be as before. 
Let $x' \in M$. Then 
\begin{equation*}
\tilde r(x') - \tilde r(x) 
\le d(x', y_0) - \tau(y_0) - \ll( d(x, y_0) - \tau(y_0) \rr) \le d(x, x').
\end{equation*}
By symmetry with respect to $x'$ and $x$, $\tilde r$ is Lipschitz.

Let us show (\ref{eq:closed_domi_r}). 
Clearly $r(x) \le 0$ for $x \in M(\Gamma, \tau)$.
Let $x \in M$ satisfy $r(x) \le 0$, and let $y_0$ be as before.
Then 
\begin{equation*}
d(x, y_0) - \tau(y_0) = \tilde r(x) = r(x) \le 0,
\end{equation*}
and $x \in M(\Gamma, \tau)$. Hence (\ref{eq:closed_domi_r}) holds. 
The equation (\ref{eq:open_domi_r}) can be proven in a similar way.
\end{proof}

If $\Gamma \subset \p M$ and $\tau$ is a constant function, then
\begin{align*}
M(\Gamma, \tau) &= \{ x \in M;\ r_{\Gamma, \tau}(x) \le 0 \} 
= \{ x \in M;\ \inf_{y \in \Gamma} d(x,y) \le \tau \}
\\&= \{ x \in M;\ d(x,\Gamma) \le \tau \}.
\end{align*}
Thus for a constant $\tau$, our definition of $M(\Gamma, \tau)$ coincides with the definition 
of the domain of influence in \cite{KKL}. 

\begin{lemma}
\label{lem:level_set_is_null}
Let $A \subset M$ be compact and let $\tau : A \to \R$ be continuous. 
We define
\begin{equation*}
r(x) := \inf_{y \in A} (d(x,y) - \tau(y)), \quad x \in M.
\end{equation*}
If $\tau$ is strictly positive on $A$ or $A$ is a null set, then $\{x \in M;\ r(x) = 0\}$
is a null set.
We mean by a null set a set of measure zero with respect to the Riemannian volume measure.
\end{lemma}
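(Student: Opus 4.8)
The plan is to show that at almost every point of the level set $E := \{x \in M;\ r(x) = 0\}$ the Lipschitz function $r$ is differentiable with non-zero gradient, and then to deduce that $E$ is null by a Lebesgue density argument. First I would record that $r$ is finite and $1$-Lipschitz on $M$ (an infimum of the $1$-Lipschitz functions $x \mapsto d(x,y) - \tau(y)$, $y \in A$, over a non-empty compact set), so that $r$ is differentiable almost everywhere by Rademacher's theorem, and that, by compactness of $A$, the infimum defining $r(x)$ is attained at some $y_0 \in A$. If $x \in E$ then $d(x,y_0) = \tau(y_0) =: \ell$; when $\tau$ is strictly positive this gives $\ell > 0$ and hence $x \ne y_0$, while when $A$ is a null set the set $E \cap A$ is null and for $x \in E \setminus A$ we again have $x \ne y_0$. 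So in either case $\ell = d(x,y_0) > 0$ at the points of $E$ that matter.

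The key step is to show that \emph{if, in addition, $r$ is differentiable at such a point $x$, then $\nabla r(x) \ne 0$}. Here I would use that $(M,d)$ is a compact length space, hence any two of its points are joined by a shortest curve: let $\gamma : [0,\ell] \to M$ be such a curve from $x$ to $y_0$, parametrized by arc length. Since subarcs of shortest curves are shortest, $d(\gamma(t),x) = t$ and $d(\gamma(t),y_0) = \ell - t$ for $t \in [0,\ell]$. Evaluating the infimum in the definition of $r$ at $y_0$ gives $r(\gamma(t)) \le d(\gamma(t),y_0) - \tau(y_0) = -t$, and the $1$-Lipschitz bound gives $r(\gamma(t)) \ge r(x) - d(\gamma(t),x) = -t$; hence $r(\gamma(t)) = -t$ on $[0,\ell]$. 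Were $\nabla r(x) = 0$, differentiability of $r$ at $x$ would force $r(\gamma(t)) - r(x) = o(d(\gamma(t),x)) = o(t)$ as $t \to 0^+$, contradicting $r(\gamma(t)) = -t$.

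It remains to run the density argument. Let $B$ be the set of points of $E \setminus \p M$ — and, when $A$ is null, also of $E \setminus A$ — at which $r$ is differentiable; since $\p M$ is null, $r$ is differentiable a.e., and (in the relevant case) $A$ is null, we have $m(E) = m(B)$, and $\nabla r \ne 0$ on $B$ by the previous paragraph. Fix $x \in B$ and a chart around it. From $r(z) = r(x) + Dr(x)(z-x) + o(|z-x|)$ with $Dr(x) \ne 0$ it follows that, for small $\rho$, $E \cap B(x,\rho)$ is contained in $B(x,\rho) \cap \{z;\ |Dr(x)(z-x)| \le \varepsilon(\rho)\,|z-x|\}$ with $\varepsilon(\rho) \to 0$, whose measure is $o(\rho^n)$; hence $x$ is a point of density zero for $E$. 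Since $E$ is closed, the Lebesgue density theorem gives that almost every point of $E$ is a density point of $E$; therefore $B$ is a null set, and $m(E) = m(B) = 0$.

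The one genuinely delicate point is the construction and use of the shortest curve $\gamma$ in $M$: because $M$ has boundary, $\gamma$ need not be a smooth geodesic and may run along $\p M$. This causes no trouble here, since the argument uses only that subarcs of $\gamma$ are themselves shortest curves — a property of any length space — and no regularity of $\gamma$ is invoked. The remaining ingredients (Rademacher's theorem, the first-order expansion of a differentiable map in a chart, and the Lebesgue density theorem) are standard, and filling in the measure estimate for the slab is routine.
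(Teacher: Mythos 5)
Your proof is correct and follows the same overall strategy as the paper's: reduce to the level set away from $A$ and $\p M$, apply Rademacher's theorem, use a shortest curve from $x$ to the minimizing point $y_0 \in A$ to show that the differential of $r$ cannot vanish there, and finish with a measure-theoretic argument. Two technical choices differ, and both are genuine simplifications. First, the paper proves a \emph{uniform} lower bound $|Dr| \ge c_p$ on each chart by pairing $Dr(x)$ with the unit velocity vector $\p_t \gamma(s)$, which requires knowing that the shortest path is a genuine unit-speed geodesic near its interior endpoint (hence the reference to Alexander--Alexander on geodesics in manifolds with boundary); you only prove the pointwise statement $Dr(x) \ne 0$, via the contradiction $-t = r(\gamma(t)) - r(x) = o(t)$, which uses nothing about $\gamma$ beyond the length-space identity $d(\gamma(t),x) = t$, so no regularity of shortest paths is needed. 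Second, the paper converts the slab estimate into a bound on $V(Z_p)$ by an explicit Vitali covering argument, whereas you observe that the slab estimate says each such $x$ has density zero in $E$ and invoke the Lebesgue density theorem; this is exactly why the pointwise nonvanishing of $Dr$ suffices for you while the paper needs the uniform constant. The points you leave implicit --- comparability of the chart Euclidean distance with $d$ so that $o(|\gamma(t)-x|) = o(t)$, and a countable chart cover of $M^{\mathrm{int}}$ to sum up the null sets --- are routine and also appear (explicitly) in the paper's version.
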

\begin{proof}
Denote by $V_g$ the volume measure of $M$ and define
\begin{equation*}
Z := \{p \in M;\ r(p) = 0\}.
\end{equation*}
Let us show that 
\begin{equation} \label{eq:A_does_not_matter}
V_g(Z) = V_g(Z \setminus A).
\end{equation}
If $V_g(A) = 0$, then (\ref{eq:A_does_not_matter}) is immediate. 
If $\tau > 0$, then 
\begin{equation*}
r(q) \le d(q,q) - \tau(q) = -\tau(q) < 0, \quad q \in A.
\end{equation*}
Hence $Z \cap A = \emptyset$ and (\ref{eq:A_does_not_matter}) holds.

Let $p \in M^\text{int}$. There is a chart $(U, \phi)$ of $M^\text{int}$ such that
$\phi(p) = 0$ and that the closure of the open Euclidean unit ball $B$ of $\R^n$
is contained in $\phi(U)$.
We denote $U_p := \phi^{-1}(B)$.

The sets $U_p$, $p \in M^\text{int}$, form an open cover for $M^\text{int}$,
and as $M^\text{int}$ is second countable, 
there is a countable cover $U_{p_j}$, $j = 1, 2, \dots$, of $M^\text{int}$.
Hence
\begin{equation*}
V_g(Z) = V_g( (Z \setminus A) \cap (\p M \cup \bigcup_{j=1}^\infty U_{p_j}))
\le \sum_{j=1}^\infty V_g( (Z \setminus A) \cap U_{p_j}).
\end{equation*}
It is enough to show that $\phi((Z \setminus A) \cap U_p)$
is a null set with respect to the Lebesgue measure on $B$.

We define for $v = (v^1, \dots, v^n) \in \R^n$ and $x \in B$,
\begin{equation*}
|v|_{g(x)}^2 := \sum_{j,k = 1}^n v^j g_{jk}(x) v^k, 
\quad |v|^2 := \sum_{j = 1}^n (v^j)^2,
\end{equation*}
where $(g_{jk})_{j,k=1}^n$ is the metric $g$ in the local coordinates on $\phi(U)$.
As $\overline B$ is compact in $\phi(U)$, there is $c_p > 0$ such that
for all $v \in \R^n$ and $x \in B$
\begin{equation*}
c_p |v|_{g(x)} \le |v| \le \frac{1}{c_p} |v|_{g(x)}.
\end{equation*}

As in the proof of Lemma \ref{lem:characterization_of_domi}
we see that $r$ is Lipschitz continuous on $M$.
Thus by Rademacher's theorem there is a null set $N \subset B$ such that 
$r$ is differentiable in the local coordinates in $B \setminus N$.
We denote $Z_p := \phi((Z \setminus A) \cap U_p) \setminus N$.

Let $x \in Z_p$ and denote $p_x := \phi^{-1}(x)$.
As $A$ is compact and $q \mapsto d(p_x, q) - \tau(q)$ 
is continuous, there is $q_x \in A$ such that
\begin{equation*}
d(p_x, q_x) - \tau(q_x) = r(p_x) = 0.
\end{equation*}
We denote $s := d(p_x, q_x) = \tau(q_x)$.
As $p_x \notin A$ and $q_x \in A$, we have that $0 < s$.

As $M$ is connected and complete as a metric space, Hopf-Rinow theorem gives
a shortest path $\gamma : [0, s] \to M$ 
parametrized by arclength and joining $q_x = \gamma(0)$ and $p_x = \gamma(s)$.
For a study of shortest paths on Riemannian manifolds with boundary
see \cite{Ax2}.
As $p_x \in U_p$, there is $a \in (0, s)$ such that $\gamma|_{[a , s]}$
is a unit speed geodesic of $U_p \subset M^\text{int}$.
As $\gamma$ is parametrized by arclength,
\begin{equation*}
r(\gamma(t)) \le d(\gamma(t), q_x) - \tau(q_x) = t - s, \quad t \in [a, s],
\end{equation*}
and as $r(\gamma(s)) = r(p_x) = 0$,
\begin{equation*}
\frac{r(\gamma(t)) - r(\gamma(s))}{t - s} \ge \frac{t - s - 0}{t - s} = 1, \quad t \in (a,s).
\end{equation*}

The function $r \circ \gamma$ is differentiable at $s$ by the chain rule, and
\begin{equation*}
\p_t (r \circ \gamma)(s)
= \lim_{t \to s^-} \frac{r(\gamma(t)) - r(\gamma(s))}{t - s} \ge 1.
\end{equation*}
As $\gamma$ is a unit speed geodesic near $s$,
\begin{equation*}
c_p = c_p |\p_t \gamma(s)|_{g(x)} \le |\p_t \gamma(s)| 
\le \frac{1}{c_p}|\p_t \gamma(s)|_{g(x)} = \frac{1}{c_p}.
\end{equation*}
Hence in the local coordinates in $B$
\begin{equation*}
|D r(x)| \ge D r(x) \cdot \frac{\p_t \gamma(s)}{|\p_t \gamma(s)|}
\ge c_p \p_t (r \circ \gamma)(s) \ge c_p.
\end{equation*}

Let $\epsilon > 0$. 
There is $\delta(x) > 0$ such that 
\begin{equation}
\label{eq:r_derivative_approximation}
|r(y) - r(x) - D r(x) \cdot (y - x)| \le c_p \epsilon |y - x|, \quad y \in B(x, \delta(x)),
\end{equation}
and $B(x, \delta(x)) \subset B$.
Here $B(x, \delta)$ is the open Euclidean ball with center $x$ and radius $\delta$.

The sets $B(x, \delta(x)/5)$, $x \in Z_p$, form an open cover for $Z_p$,
and as $Z_p$ is second countable, 
there is $(x_j)_{j=1}^\infty \subset Z_p$ such that the sets 
\begin{equation*}
B_j' := B(x_j, \delta(x_j)/5)
\end{equation*}
form an open cover for $Z_p$.
By Vitali covering lemma there is an index set $J \subset \N$ such that
the sets $B_j'$, $j \in J$, are disjoint and the sets
\begin{equation*}
B_j := B(x_j, \delta(x_j)), \quad j \in J,
\end{equation*}
form an open cover for $Z_p$.

We denote 
\begin{equation*}
v_j := \frac{D r(x_j)}{|D r(x_j)|}, \quad \delta_j := \delta(x_j).
\end{equation*}
If $y \in Z_p \cap B_j$, then $r(y) = 0 = r(x_j)$ and by (\ref{eq:r_derivative_approximation})
\begin{equation*}
|v_j \cdot (y - x_j)| \le \frac{1}{|D r(x_j)|} c_p \epsilon |y - x_j| \le \epsilon \delta_j.
\end{equation*}

We denote by $\alpha_m$ the volume of the open Euclidean unit ball in $\R^m$
and by $V$ the Lebesgue measure on $B$.
Let $j \in J$.
Using a translation and a rotation we get such 
coordinates that $x_j = 0$ and $v_j = (1, 0, \dots, 0)$.
In these coordinates 
\begin{equation*}
Z_p \cap B_j \subset \{ (y^1, y') \in \R \times \R^{n-1};\ |y^1| \le \epsilon \delta_j, |y'| \le \delta_j \}.
\end{equation*}
Hence $V(Z_p \cap B_j) \le 2 \epsilon \delta_j \alpha_{n-1} \delta_j^{n-1}$.
Particularly,
\begin{equation*}
V(Z_p \cap B_j) 
\le \epsilon \frac{2 \alpha_{n-1}}{\alpha_{n}} V(B_j) 
= \epsilon c_n V(B_j'),
\end{equation*}
where $c_n := 2 \cdot 5^n \alpha_{n-1} / \alpha_{n}$.
Then
\begin{align*}
V(\phi((Z \setminus A) \cap U_p))
&= V(Z_p) = V(Z_p \cap \bigcup_{j \in J} B_j) 
\le \sum_{j \in J} V(Z_p \cap B_j)
\\&\le \epsilon c_n \sum_{j \in J} V(B_j') 
= \epsilon c_n V(\bigcup_{j \in J} B_j') 
\le \epsilon c_n V(B).
\end{align*}
As $\epsilon > 0$ is arbitrary, $V(\phi((Z \setminus A) \cap U_p)) = 0$ and the claim is proved.
\end{proof}

\section{Approximately constant wave fields on a domain of influence}
\label{sec:regularization}

\begin{lemma}
\label{lem:cross_term}
Let $f \in L^2((0, 2T) \times \p M)$. Then the equation (\ref{eq:inner_product_with_1}) holds.
\end{lemma}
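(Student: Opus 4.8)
The plan is to prove the identity $(u^f(T), 1)_{L^2(M; dV_\mu)} = (If, 1)$ by the same Blagovestchenskii-type bilinear trick that underlies \eqref{eq:inner_products}, but with the second wave replaced by the constant function $1$, which solves the wave equation trivially (it is time-independent, so $\p_t^2 1 = 0$ and $a(x,D_x)1 = 0$, while $b(x,D_x)1 = 0$ on $\p M$). Concretely, I would introduce the auxiliary function
\begin{equation*}
w(t) := (u^f(t), 1)_{L^2(M; dV_\mu)} - (u^f(t), 1)_{L^2(M; dV_\mu)}\big|_{\text{something symmetric}},
\end{equation*}
or more simply work directly with $\phi(t) := (u^f(t), 1)_{L^2(M; dV_\mu)}$. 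Differentiating twice in $t$ and using the equation $\p_t^2 u^f = -a(x,D_x)u^f$ together with Green's formula for the self-adjoint operator $a(x,D_x)$ on $L^2(M; dV_\mu)$, I expect
\begin{equation*}
\phi''(t) = -(a(x,D_x) u^f(t), 1)_{L^2(M; dV_\mu)} = -(u^f(t), a(x,D_x) 1)_{L^2(M; dV_\mu)} + (\text{boundary terms}),
\end{equation*}
where the interior term vanishes since $a(x,D_x)1 = 0$, and the boundary terms reduce, via the definition of $b(x,D_x)$ and $dS_g$, to $\int_{\p M} f(t,\cdot)\, dS_g$ (the term with $b(x,D_x)1 = 0$ drops out). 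So $\phi''(t) = \int_{\p M} f(t, y)\, dS_g(y)$.

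Next I would integrate this ODE using the initial conditions. Since $u^f(0) = 0$ and $\p_t u^f(0) = 0$, we get $\phi(0) = 0$ and $\phi'(0) = 0$, hence
\begin{equation*}
\phi(T) = \int_0^T \int_0^t \int_{\p M} f(s,y)\, dS_g(y)\, ds\, dt = \int_0^T (T - s) \int_{\p M} f(s, y)\, dS_g(y)\, ds.
\end{equation*}
On the other hand, $(If, 1) = \int_0^{2T}\!\int_{\p M} 1_{(0,T)}(t)\big(\int_0^t f(s,y)\,ds\big) dS_g(y)\, dt$; swapping the order of integration in $t$ and $s$ over the region $0 < s < t < T$ gives exactly $\int_0^T (T-s) \int_{\p M} f(s,y)\, dS_g(y)\, ds$. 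Matching the two expressions yields $(u^f(T),1)_{L^2(M;dV_\mu)} = (If,1)$, which is \eqref{eq:inner_product_with_1}.

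The main obstacle is the justification of the integration by parts / Green's formula and the two $t$-differentiations at the claimed regularity: $f$ is merely in $L^2((0,2T)\times\p M)$, so $u^f$ is a weak solution and $\phi$ need not be classically twice differentiable. The standard remedy is to first prove the identity for smooth $f$ compactly supported in $(0,2T)\times\p M$ (where all manipulations are classical, using the known interior/boundary regularity of $u^f$ and the hidden-regularity / trace estimates for the wave equation that make $u^f(T)$ a well-defined $L^2(M)$ element depending continuously on $f$), and then extend to general $f \in L^2$ by density, using that both sides of \eqref{eq:inner_product_with_1} are continuous linear functionals of $f$ in the $L^2((0,2T)\times\p M)$ norm — the left side by the continuity of $f \mapsto u^f(T)$ in $L^2(M)$, the right side trivially since $I$ is bounded on $L^2$. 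I would also remark that this identity is essentially classical (cf. \cite{Be2} in the isotropic case), so the contribution here is only the verification that it goes through verbatim for the weighted anisotropic operator $a(x,D_x)$.
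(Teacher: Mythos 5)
Your proposal is correct and follows essentially the same route as the paper: define $\phi(t) = (u^f(t),1)_{L^2(M;dV_\mu)}$, compute $\phi'' (t) = (f(t),1)_{L^2(\p M; dS_g)}$ via Green's formula using $a(x,D_x)1 = 0$ and $b(x,D_x)1 = 0$, integrate twice with zero initial data, and reduce to the case of smooth compactly supported $f$ by the boundedness of $h \mapsto u^h(T)$ from $L^2((0,2T)\times\p M)$ to $L^2(M)$. The only difference is cosmetic: the paper rewrites the iterated integral directly as $(If,1)$ rather than passing through the kernel $(T-s)$, and it states the density reduction up front rather than at the end.
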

\begin{proof}
The map $h \mapsto u^h(T)$ is bounded $L^2((0, 2T) \times \p M) \to L^2(M)$, see e.g. \cite{LaTr}.
Thus is it enough to prove the equation (\ref{eq:inner_product_with_1}) for $f \in C_c^\infty( (0, 2T) \times \p M)$.
Let us denote
\begin{equation*}
v(t) := (u^f(t), 1)_{L^2(M; dV_\mu)}.
\end{equation*}
As $a(x,D_x) 1 = 0$ and $b(x,D_x)1 = 0$,
we may integrate by parts %as in the proof of [ITRC, Lemma 1]
\begin{align*}
\p_t^2 v(t) &= - (a(x, D_x) u^f(t), 1)_{L^2(M; dV_\mu)}
\\&= - \ll( (a(x, D_x) u^f(t), 1)_{L^2(M; dV_\mu)} - (u^f(t), a(x, D_x) 1)_{L^2(M; dV_\mu)} \rr)
\\&= (b(x, D_x) u^f(t), 1)_{L^2(\p M; dS_g)} - (u^f(t), b(x, D_x) 1)_{L^2(\p M; dS_g)}
\\&= (f(t), 1)_{L^2(\p M; dS_g)}.
\end{align*}
As $\p_t^j v(0) = 0$ for $j=0,1$,
%Hence 
%\begin{equation*}
%\p_t v(t) = \int_0^t (f(s), 1)_{L^2(\p M; dS_g)} ds
%\end{equation*}
%and
\begin{align*}
v(T) &= \int_0^T \int_0^t \int_{\p M} f(s,x) dS_g(x) ds dt
\\&= \int_0^{2T} \int_{\p M} 1_{(0,T)}(t) \int_0^t f(s,x) ds dS_g(x) dt.
\end{align*}
\end{proof}

The proof of Theorem \ref{thm:minimization_on_subspace} 
is similar to the proof of the corresponding result in \cite{ITRC}.
We give the proof for the sake of completeness.

\begin{proof}[Proof of Theorem \ref{thm:minimization_on_subspace}.]
We define
\begin{equation*}
E(f) := (f, Kf) - 2 (If, 1) + \alpha \norm{f}^2.
\end{equation*}
Then 
\begin{equation}
\label{eq:energy_functional}
E(f) = \norm{u^{f}(T) - 1}_{L^2(M; dV_\mu)}^2 - \norm{1}_{L^2(M; dV_\mu)}^2 + \alpha \norm{f}^2.
\end{equation}
Let $(f_j)_{j=1}^\infty \subset S$ be such that 
\begin{equation*}
\lim_{j \to \infty} E(f_j) = \inf_{f \in S} E(f).
\end{equation*}
Then 
\begin{equation*}
\alpha \norm{f_j} \le E(f_j) + \norm{1}_{L^2(M; dV_\mu)}^2,
\end{equation*}
and $(f_j)_{j=1}^\infty$ is bounded in $S$.
As $S$ is a Hilbert space,
there is a subsequence of $(f_j)_{j=1}^\infty$ converging weakly in $S$.
Let us denote the limit by $f_\infty \in S$ and the subsequence still by $(f_j)_{j=1}^\infty$.

The map $h \mapsto u^h(T)$ is bounded
\begin{equation*}
L( (0, 2T) \times \p M) \to H^{5/6 - \epsilon}(M)
\end{equation*}
for $\epsilon > 0$, see \cite{LaTr}.
Hence $h \mapsto u^h(T)$ is a compact operator
\begin{equation*}
L^2( (0, 2T) \times \p M) \to L^2(M),
\end{equation*}
and $u^{f_j}(T) \to u^{f_\infty}(T)$ in $L^2(M)$ as $j \to \infty$.
Moreover, the weak convergence implies 
\begin{equation*}
\norm{f_\infty} \le \liminf_{j \to \infty} \norm{f_j}.
\end{equation*}
Hence
\begin{align*}
E(f_\infty) &= \lim_{j \to \infty} \norm{u^{f_j}(T) - 1}_{L^2(M; dV_\mu)}^2 - \norm{1}_{L^2(M; dV_\mu)}^2 + \alpha \norm{f_\infty}^2
\\&\le \lim_{j \to \infty} \norm{u^{f_j}(T) - 1}_{L^2(M; dV_\mu)}^2 - \norm{1}_{L^2(M; dV_\mu)}^2 + \alpha 
\liminf_{j \to \infty} \norm{f_j}^2
\\&= \liminf_{j \to \infty} E(f_j) = \inf_{f \in S} E(f),
\end{align*}
and $f_\infty \in S$ is a minimizer.

Let $f_\alpha$ be a minimizer and $h \in S$. 
By orthogonality of the projection $P$ and identity (\ref{eq:inner_products}),
it is clear that $PKP$ is self-adjoint and positive semidefinite.
Denote by $D_h$ the Fr\'echet derivative to direction $h$. 
As $f_\alpha = Pf_\alpha$ and $h = Ph$
\begin{equation*}
0 = D_h E(f_\alpha) = 2 (h, PKP f_\alpha) - 2 (h, PI^+ 1) + 2 \alpha (h, f_\alpha).
\end{equation*}
Hence $f_\alpha$ satisfies (\ref{eq:normal}).
As $PKP$ is positive semidefinite, $PKP + \alpha$ is positive definite
and solution of (\ref{eq:normal}) is unique.
\end{proof}

\begin{lemma}
\label{lem:approximation_by_simple}
Let $\Gamma \subset \p M$ be open, $\tau \in C(\bar \Gamma)$ and $\epsilon > 0$.
Then there is a simple function
\begin{equation*}
\tau_\epsilon(y) = \sum_{j=1}^N T_j 1_{\Gamma_j}(y), \quad y \in \p M,
\end{equation*}
where $N \in \N$, $T_j \in \R$ and $\Gamma_j \subset \Gamma$ are open,
such that 
\begin{align*}
\tau - \epsilon &< \tau_\epsilon \quad \text{almost everywhere on $\Gamma$ and}
\\\tau_\epsilon &< \tau \quad \text{on $\bar \Gamma$}.
\end{align*}
\end{lemma}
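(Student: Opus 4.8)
The plan is to approximate $\tau$ from below by a step function on $\Gamma$, being careful near $\partial\Gamma$ so that the strict inequality $\tau_\epsilon<\tau$ holds everywhere on $\bar\Gamma$, not just on $\Gamma$. First I would exhaust $\Gamma$ by a compact set: since $\Gamma$ is open in $\partial M$, there is a compact set $K\subset\Gamma$ with $dS_g(\Gamma\setminus K)$ as small as we please; more importantly, on $\partial\Gamma$ (where $\tau$ may be small) I want $\tau_\epsilon$ to be strictly dominated, and this is automatic if $\tau_\epsilon$ is supported away from $\partial\Gamma$ — but then the first inequality $\tau-\epsilon<\tau_\epsilon$ would fail on a neighborhood of $\partial\Gamma$. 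So instead I keep the supports inside $\Gamma$ but let them come close to $\partial\Gamma$, and exploit continuity of $\tau$ on the compact set $\bar\Gamma$.

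The concrete construction is as follows. By uniform continuity of $\tau$ on $\bar\Gamma$, pick a finite cover of $\bar\Gamma$ by (relatively) open sets $W_1,\dots,W_N$ in $\partial M$ on each of which $\operatorname{osc}(\tau)<\epsilon/2$. On each $W_i$ set $t_i:=\inf_{W_i\cap\bar\Gamma}\tau - \epsilon/2$, so that $\tau-\epsilon<t_i<\tau$ on $W_i\cap\bar\Gamma$, the right inequality because $t_i<\inf_{W_i\cap\bar\Gamma}\tau\le\tau(y)$ for $y\in W_i\cap\bar\Gamma$, the left because $\tau(y)-t_i = \tau(y)-\inf_{W_i\cap\bar\Gamma}\tau+\epsilon/2 < \epsilon/2+\epsilon/2=\epsilon$. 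Now I must convert these $W_i$ into open subsets of $\Gamma$. Set $\Gamma_i := W_i\cap\Gamma$; these are open in $\partial M$ and contained in $\Gamma$. Because $\{W_i\}$ covers $\bar\Gamma\supset\Gamma$, the $\Gamma_i$ cover $\Gamma$. Finally, to get a genuine (non-overlapping-value) step function I disjointify: order the indices and set $\tau_\epsilon(y):=\max_i\{\, t_i\,:\, y\in\Gamma_i\,\}$ for $y$ in the union, $\tau_\epsilon:=0$ outside. This is a simple function of the required form $\sum_j T_j 1_{\Gamma_j'}$ after splitting the union into the pieces on which the max is achieved by a fixed $t_i$ (a finite Boolean combination of the $\Gamma_i$, hence a finite union of open sets, up to boundaries which I can absorb since we only need the first inequality almost everywhere).

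It remains to check the two required inequalities. For $y\in\bar\Gamma$: either $y\notin\bigcup\Gamma_i$, where $\tau_\epsilon(y)=0$; if $\tau>0$ here we would be fine, but $\tau$ need only be continuous and could vanish, so I instead note that $\bigcup\Gamma_i\supset\Gamma$ and on $\partial\Gamma$ I argue directly — actually the cleaner route is: every $y\in\bar\Gamma$ lies in some $W_i$, and if $y\in\bar\Gamma$ but $y\notin\Gamma_i$ for that $i$, then $y\in\partial\Gamma$; handle this by shrinking, replacing $t_i$ by $t_i$ only where it is already $<\tau$, which holds on all of $W_i\cap\bar\Gamma$ as shown. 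Where $\tau_\epsilon(y)=t_i>0$ we have $y\in\Gamma_i\subset W_i\cap\bar\Gamma$, so $\tau_\epsilon(y)=t_i<\tau(y)$; taking the max over finitely many such $i$ preserves this. For the lower bound: for a.e.\ $y\in\Gamma$ (all $y\in\Gamma$, in fact) pick $i$ with $y\in W_i$; then $y\in\Gamma_i$, so $\tau_\epsilon(y)\ge t_i>\tau(y)-\epsilon$. The main obstacle is purely bookkeeping — reconciling ``$\Gamma_j\subset\Gamma$ open'' with ``covers up to $\partial\Gamma$'' and ``$\tau_\epsilon<\tau$ on all of $\bar\Gamma$'' — and it is resolved precisely by the observation that $\inf_{W_i\cap\bar\Gamma}\tau$ is attained on the compact set $W_i\cap\bar\Gamma$ while the $\Gamma_i$ need only cover the open set $\Gamma$; no real analytic difficulty arises.
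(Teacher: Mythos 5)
Your construction is essentially the right one, and it is considerably more explicit than the paper's own proof, which merely reduces to the Euclidean case by charts and a partition of unity and then declares the claim clear. The cover $W_i$ with oscillation below $\epsilon/2$, the levels $t_i=\inf_{W_i\cap\bar\Gamma}\tau-\epsilon/2$, and the sets $\Gamma_i=W_i\cap\Gamma$ all work, and your verification of the two inequalities for the pointwise maximum $\max\{t_i:\ y\in\Gamma_i\}$ on $\Gamma$ is correct. The genuine gap is in the step where you convert this maximum into a function of the prescribed form $\sum_j T_j 1_{\Gamma_j}$ with $\Gamma_j$ open. The set on which the maximum is achieved by a fixed $t_i$ is $\Gamma_i\setminus\bigcup_{t_j>t_i}\Gamma_j$, which is not open, and your plan to ``absorb the boundaries almost everywhere'' fails for two reasons: the boundary of an open subset of $\p M$ can have positive surface measure (this is precisely the point of the paper's fat Cantor set example), and in any case the inequality $\tau_\epsilon<\tau$ is required everywhere on $\bar\Gamma$, not almost everywhere. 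The step is easily repaired without any a.e.\ concession: order the levels so that $t_1\le t_2\le\dots\le t_N$, put $U_k:=\bigcup_{i\ge k}\Gamma_i$, and note that for $y\in U_1$ one has $\max\{t_i:\ y\in\Gamma_i\}=t_{m(y)}$ with $m(y)=\max\{i:\ y\in\Gamma_i\}$, so that telescoping gives
\begin{equation*}
\max\{t_i:\ y\in\Gamma_i\}
= t_1 1_{U_1}(y)+\sum_{k=2}^{N}(t_k-t_{k-1})\,1_{U_k}(y),
\end{equation*}
which is exactly of the required form since each $U_k$ is an open subset of $\Gamma$.

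A second point, which you noticed but did not resolve (``handle this by shrinking'' does not address it): every admissible $\tau_\epsilon$ vanishes identically on $\bar\Gamma\setminus\Gamma$, because the sets $\Gamma_j$ are contained in $\Gamma$. Hence the strict inequality $\tau_\epsilon<\tau$ on all of $\bar\Gamma$ can only hold if $\tau>0$ on $\bar\Gamma\setminus\Gamma$, and no choice of levels or covers can circumvent this. This is a defect of the statement rather than of your argument --- the paper's own proof is silent on it as well --- and it disappears if one either assumes $\tau>0$ on $\bar\Gamma\setminus\Gamma$ or requires $\tau_\epsilon<\tau$ only on $\Gamma$; the latter costs at most a subset of $\p M$, a null set, in the set inclusions used in Lemma \ref{lem:measures_by_approximation}, and so does not affect the volume comparison there. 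Apart from these two points your argument is complete.
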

\begin{proof}
As $\p M$ is compact, there is a finite set of coordinate charts covering $\p M$.
Using partition of unity, we see that it is enough to prove the claim in the case 
when $\Gamma \subset \R^{n-1}$ is an open set. 
But then $\tau$ is a continuous function on a compact set $\overline \Gamma \subset \R^{n-1}$,
and it is clear that there is a simple function with the required properties. 
%As $\p M$ is compact there is a finite set of charts $( (U_k, \phi_k))_{k=1}^K$
%such that $\p M \subset \bigcup_{k = 1}^K U_k$ and $\phi_k : U_k \to B$,
%where $B$ is the open Euclidean unit ball in $\R^{n-1}$.
%Let $(\psi_k)_{k=1}^K$ be a partition of unity subordinate to $(U_k)_{k=1}^K$.
%Let $k \in \{1, \dots, K\}$ and denote 
%\begin{equation*}
%G := \phi_k(\Gamma \cap U_k), \quad H := \phi_k(\supp(\psi_k) \cap \bar \Gamma).
%\end{equation*}
%We define on $H$ a function $\sigma := (\psi_k \tau) \circ \phi_k^{-1}$.
%As finite sum of simple functions is simple,
%it is enough to show that there is a simple function 
%\begin{equation*}
%\sigma_\epsilon(y) = \sum_{j=1}^N T_j 1_{\Gamma_j}(y), \quad y \in B,
%\end{equation*}
%where $N \in \N$, $T_j \in \R$ and $\Gamma_j \subset G$ are open,
%such that 
%\begin{equation*}
%\sigma - \frac{\epsilon}{K} < \sigma_\epsilon
%\end{equation*}
%almost everywhere on $G$ and $\sigma_\epsilon < \sigma$ on $H$.
%
%As $\sigma$ is continuous on compact set $H$, there is $r > 0$ such that
%\begin{equation*}
%|\sigma(y_1) - \sigma(y_2)| \le \epsilon/(3K), \quad \text{when}\ |y_1 - y_2| \le r.
%\end{equation*}
%There is a finite partition $(\tilde \Gamma_j)_{j=1}^N$ of $B$ such that 
%$|y_1 - y_2| \le r$ when $y_1, y_2 \in \tilde \Gamma_j$, 
%and that the boundary $\p \tilde \Gamma_j$ is of measure zero.
%We define $\Gamma_j := \tilde \Gamma_j^{\text{int}} \cap G$ and $T_j := \inf_{y \in \Gamma_j} \sigma(y) - \epsilon/(3K)$.
%Then for $y \in \Gamma_j$, $T_j < \sigma(y)$
%and $T_j \ge \sigma(y) - 2 \epsilon/(3K) > \sigma(y) - \epsilon/K$.
\end{proof}

%In the proof of the next lemma, we use notation
%\begin{equation*}
%\tilde M(\tau) := \{x \in M; \text{there is $y \in \p M$ s.t. $d(x,y) < \tau(y)$}\}, 
%\quad \tau \in C(\p M).
%\end{equation*}
\begin{lemma}
\label{lem:measures_by_approximation}
Let $\Gamma \subset \p M$ be open, $\tau \in C(\bar \Gamma)$ and let $\tau_\epsilon$, $\epsilon > 0$, satisfy
\begin{align*}
\tau - \epsilon &< \tau_\epsilon \quad \text{almost everywhere on $\Gamma$ and}
\\\tau_\epsilon &< \tau \quad \text{on $\bar \Gamma$}.
\end{align*}
Then
\begin{equation*}
\lim_{\epsilon \to 0} m(M(\Gamma, \tau_\epsilon)) = m(M(\Gamma, \tau)).
\end{equation*}
\end{lemma}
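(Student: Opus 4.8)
The plan is to show that $m(M(\Gamma,\tau_\epsilon))$ is squeezed, as $\epsilon\to0$, between $m(M^0(\Gamma,\tau))$ and $m(M(\Gamma,\tau))$, and then to identify these two numbers. First I would record two preliminary facts. Since $\tau_\epsilon<\tau$ on $\bar\Gamma$, any $x\in M(\Gamma,\tau_\epsilon)$ has a witness $y\in\bar\Gamma$ with $d(x,y)\le\tau_\epsilon(y)<\tau(y)$, so $M(\Gamma,\tau_\epsilon)\subset M(\Gamma,\tau)$; moreover $M(\Gamma,\tau_\epsilon)$ is measurable (for the simple functions $\tau_\epsilon$ supplied by Lemma~\ref{lem:approximation_by_simple} this follows by splitting $\tau_\epsilon$ into its constant pieces $T_j1_{\Gamma_j}$ and noting, via Lemma~\ref{lem:level_set_is_null} with $A=\overline{\Gamma_j}$, that $M(\Gamma,\tau_\epsilon)$ differs by a null set from the open set $M^0(\Gamma,\tau_\epsilon)$). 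Hence $\limsup_{\epsilon\to0}m(M(\Gamma,\tau_\epsilon))\le m(M(\Gamma,\tau))$. Second, by Lemma~\ref{lem:characterization_of_domi} the set $M(\Gamma,\tau)\setminus M^0(\Gamma,\tau)$ equals the level set $\{x\in M;\ r_{\Gamma,\tau}(x)=0\}$, and Lemma~\ref{lem:level_set_is_null} applied with $A=\bar\Gamma$ — which is compact and contained in the null set $\p M$ — shows this level set is a null set, so $m(M(\Gamma,\tau))=m(M^0(\Gamma,\tau))$.

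The substance of the lemma is the reverse bound $\liminf_{\epsilon\to0}m(M^0(\Gamma,\tau_\epsilon))\ge m(M^0(\Gamma,\tau))$, and here the delicate point — which I expect to be the main obstacle — is that $\tau_\epsilon>\tau-\epsilon$ holds only \emph{almost everywhere} on $\Gamma$, so one cannot simply evaluate $\tau_\epsilon$ at a chosen witness point. To get around this I would fix $x\in M^0(\Gamma,\tau)$, choose $y\in\Gamma$ and $\delta>0$ with $d(x,y)<\tau(y)-2\delta$, and use continuity of $z\mapsto d(x,z)-\tau(z)$ on $\bar\Gamma$ to find an open neighbourhood $V\subset\Gamma$ of $y$ with $d(x,\cdot)<\tau(\cdot)-\delta$ on $V$. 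Because $V$ is open and nonempty it has positive $dS_g$-measure, so the full-measure set $\{z\in\Gamma;\ \tau_\epsilon(z)>\tau(z)-\epsilon\}$ meets $V$; choosing such a point $z\in V$ gives $d(x,z)<\tau(z)-\delta<\tau_\epsilon(z)+\epsilon-\delta$, hence $d(x,z)<\tau_\epsilon(z)$ whenever $\epsilon<\delta$, i.e.\ $x\in M^0(\Gamma,\tau_\epsilon)$. Thus $\liminf_{\epsilon\to0}1_{M^0(\Gamma,\tau_\epsilon)}(x)\ge1_{M^0(\Gamma,\tau)}(x)$ for every $x\in M$, and Fatou's lemma (along an arbitrary sequence $\epsilon_k\to0$, noting that $M^0(\Gamma,\tau_\epsilon)$ is a union of open balls, hence open and measurable) yields $\liminf_{\epsilon\to0}m(M^0(\Gamma,\tau_\epsilon))\ge m(M^0(\Gamma,\tau))$.

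Combining these, and using $M^0(\Gamma,\tau_\epsilon)\subset M(\Gamma,\tau_\epsilon)\subset M(\Gamma,\tau)$, gives
\begin{equation*}
m(M(\Gamma,\tau))=m(M^0(\Gamma,\tau))\le\liminf_{\epsilon\to0}m(M^0(\Gamma,\tau_\epsilon))\le\liminf_{\epsilon\to0}m(M(\Gamma,\tau_\epsilon))\le\limsup_{\epsilon\to0}m(M(\Gamma,\tau_\epsilon))\le m(M(\Gamma,\tau)),
\end{equation*}
so the limit exists and equals $m(M(\Gamma,\tau))$, as claimed. Apart from the almost-everywhere difficulty addressed in the second paragraph, everything is elementary: the two preliminary facts are immediate consequences of Lemmas~\ref{lem:characterization_of_domi} and~\ref{lem:level_set_is_null}, and the conclusion is just the sandwich above together with Fatou's lemma.
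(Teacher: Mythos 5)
Your proof is correct and follows essentially the same route as the paper's: both reduce matters to the identity $m(M(\Gamma,\tau))=m(M^0(\Gamma,\tau))$ obtained from Lemmas \ref{lem:characterization_of_domi} and \ref{lem:level_set_is_null}, and both handle the almost-everywhere hypothesis by selecting a witness point for $x$ inside the full-measure subset of a suitable open piece of $\Gamma$. The only cosmetic difference is that you pass to the limit via Fatou's lemma, whereas the paper establishes the inclusions $M^0(\Gamma,\tau-\epsilon)\subset M(\Gamma,\tau_\epsilon)\subset M^0(\Gamma,\tau)$ and uses continuity of the measure along the increasing union $\bigcup_{\epsilon>0}M^0(\Gamma,\tau-\epsilon)=M^0(\Gamma,\tau)$.
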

\begin{proof}
Let $\epsilon > 0$ and denote by $N \subset \Gamma$ 
the set of measure zero where $\tau - \epsilon \ge \tau_\epsilon$ as functions on $\Gamma$.
Let us show that $M^0(\Gamma, \tau - \epsilon) \subset M(\Gamma, \tau_\epsilon)$.
Let $x \in M^0(\Gamma, \tau - \epsilon)$. Then there is $y_0 \in \Gamma$
such that 
\begin{equation*}
d(x,y_0) < \tau(y_0) - \epsilon.
\end{equation*}
As $\tau$ and the function $y \mapsto d(x,y)$ are continuous
and $\Gamma \setminus N$ is dense in $\Gamma$, there is $y \in \Gamma \setminus N$ such that
\begin{equation*}
d(x,y) < \tau(y) - \epsilon < \tau_\epsilon(y).
\end{equation*}
Hence $x \in M(\Gamma, \tau_\epsilon)$.
A similar argument shows that $M(\Gamma, \tau_\epsilon) \subset M^0(\Gamma, \tau)$.
%{\footnotesize Indeed, 
%let $x \in M(\Gamma, \tau_\epsilon)$. Then there is $y_0 \in \bar \Gamma$ such that
%\begin{equation*}
%d(x, y_0) \le \tau_\epsilon(y_0) < \tau(y_0).
%\end{equation*}
%As $\tau$ and the function $y \mapsto d(x,y)$ are continuous
%and $\Gamma$ is dense in $\bar \Gamma$, there is $y \in \Gamma$ such that
%\begin{equation*}
%d(x,y) < \tau(y).
%\end{equation*}
%Hence $x \in M^0(\Gamma, \tau)$.
%}

Clearly $M^0(\Gamma, \tau - \epsilon_1) \subset M^0(\Gamma, \tau - \epsilon_2)$ for $\epsilon_1 \ge \epsilon_2 > 0$,
and
\begin{equation*}
\bigcup_{\epsilon > 0} M^0(\Gamma, \tau - \epsilon) = M^0(\Gamma, \tau).
\end{equation*}
Hence $m(M^0(\Gamma, \tau - \epsilon)) \to m(M^0(\Gamma, \tau))$ as $\epsilon \to 0$, and
\begin{align*}
0 \le m(M^0(\Gamma, \tau)) - m(M(\Gamma, \tau_\epsilon)) &\le m(M^0(\Gamma, \tau)) - m(M^0(\Gamma, \tau - \epsilon))
\\&\to 0, \quad \text{as $\epsilon \to 0$}.
\end{align*}
Moreover, by Lemmas \ref{lem:characterization_of_domi} and \ref{lem:level_set_is_null}
\begin{equation*}
m(M(\Gamma, \tau)) = m(M^0(\Gamma, \tau)) = \lim_{\epsilon \to 0} m(M(\Gamma, \tau_\epsilon)).
\end{equation*}
\end{proof}

\begin{proof}[Proof of Theorem \ref{thm:indicator_functions}.]
We may assume without loss of generality that $\tau \le T$, as
we may replace $\tau$ by $\tau \wedge T$ in what follows.
Let us denote
\begin{equation*}
S(\Gamma, \tau) := \{ f \in L^2((0,2T) \times \p M);\ \text{$\supp(f)$ satisfies (\ref{eq:source_supp_condition})}\}.
\end{equation*}
By the finite speed of propagation for the wave equation,
we have that $\supp(u^f(T)) \subset M(\Gamma, \tau)$ whenever $f \in S(\Gamma, \tau)$.
Hence for $f \in S(\Gamma, \tau)$,
\begin{align}
\label{eq:splitting_the_difference_1}
\norm{u^f(T) - 1}_{L^2(M; dV_\mu)}^2
&= \int_{M(\Gamma, \tau)} (u^f(T) - 1)^2 dV_\mu + \int_{M \setminus M(\Gamma, \tau)} 1 dV_\mu
\\\nonumber&= \norm{u^f(T) - 1_{M(\Gamma, \tau)}}_{L^2(M; dV_\mu)}^2 
    + m(M \setminus M(\Gamma, \tau)).
\end{align}

Let $\epsilon > 0$.
By Lemmas \ref{lem:approximation_by_simple} and \ref{lem:measures_by_approximation}
there is a simple function $\tau_\delta$ satisfying
\begin{equation*}
\tau_\delta < \tau, \quad m(M(\Gamma, \tau)) - m(M(\Gamma, \tau_\delta)) < \epsilon.
\end{equation*}
By the discussion in the beginning of Section \ref{sec:domains_of_influence}, the set 
\begin{equation*}
\{ u^f(T) \in L^2(M(\Gamma, \tau_\delta));\ f \in S(\Gamma, \tau_\delta) \}
\end{equation*}
is dense in $L^2(M(\Gamma, \tau_\delta))$.
Thus there is $f \in S(\Gamma, \tau_\delta) \subset S(\Gamma, \tau)$ such that
\begin{equation*}
\norm{u^f(T) - 1_{M(\Gamma, \tau_\delta)}}_{L^2(M; dV_\mu)}^2 \le \epsilon.
\end{equation*}
Then 
\begin{equation*}
\norm{u^f(T) - 1_{M(\Gamma, \tau)}}_{L^2(M; dV_\mu)}^2 \le 
\epsilon + \norm{1_{M(\Gamma, \tau_\delta)} - 1_{M(\Gamma, \tau)}}_{L^2(M; dV_\mu)}^2
\le 2 \epsilon.
\end{equation*}

Moreover, $E(f_\alpha) \le E(f)$ and equations (\ref{eq:splitting_the_difference_1}) and (\ref{eq:energy_functional}) give
\begin{align*}
&\norm{u^{f_\alpha}(T) - 1_{M(\Gamma, \tau)}}_{L^2(M; dV_\mu)}^2   
\\&\quad= \norm{u^{f_\alpha}(T) - 1}_{L^2(M; dV_\mu)}^2 - m(M \setminus M(\Gamma, \tau))
\\&\quad\le E(f_\alpha) + \norm{1}_{L^2(M; dV_\mu)}^2 - m(M \setminus M(\Gamma, \tau))
\\&\quad\le \norm{u^f(T) - 1}_{L^2(M; dV_\mu)}^2 - m(M \setminus M(\Gamma, \tau)) + \alpha \norm{f}^2
\\&\quad= \norm{u^f(T) - 1_{M(\Gamma, \tau)}}_{L^2(M; dV_\mu)}^2 + \alpha \norm{f}^2 
\le 2 \epsilon + \alpha \norm{f}^2.
\end{align*}
We may choose first small $\epsilon > 0$ and then small $\alpha > 0$ to 
get $u^{f_\alpha}(T)$ arbitrarily close to $1_{M(\Gamma, \tau)}$ in $L^2(M)$. 
\end{proof}

\section{The boundary distance functions as maximal elements}
\label{sec:maximal_elements}

\def\tilde{\widetilde}
We denote $M^0(\tau) := M^0(\p M, \tau)$, for $\tau \in C(\p M)$, and define
\begin{equation*}
\tilde Q := \{ \tau \in C(M);\ M \setminus M^0(\tau) \ne \emptyset \}.
\end{equation*}

\begin{lemma}
\label{lem:maximal_elements}
If $\tau$ is a maximal element of $\tilde Q$, then $\tau = r_x$ 
for some $x \in M$.
Moreover, if the manifold $(M,g)$ satisfies (G),
then $R(M)$ is the set of the maximal elements of $\tilde Q$.
\end{lemma}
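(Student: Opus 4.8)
The plan is to first characterize membership in $\tilde Q$, then analyze its maximal elements. The key observation is that by Lemma \ref{lem:characterization_of_domi} applied with $\Gamma = \p M$, we have $M^0(\p M, \tau) = \{x \in M;\ r_{\p M, \tau}(x) < 0\}$, where $r_{\p M, \tau}(x) = \min_{y \in \p M}(d(x,y) - \tau(y))$. Hence $\tau \in \tilde Q$ if and only if there exists $x_0 \in M$ with $r_{\p M, \tau}(x_0) \ge 0$, i.e. $d(x_0, y) \ge \tau(y)$ for all $y \in \p M$, i.e. $\tau \le r_{x_0}$. So $\tilde Q = \bigcup_{x \in M}\{\tau \in C(M);\ \tau \le r_x\}$, which in particular shows every $r_x$ lies in $\tilde Q$.

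First I would prove the first assertion: if $\tau$ is maximal in $\tilde Q$, then $\tau = r_x$ for some $x$. By the characterization above, $\tau \in \tilde Q$ gives some $x \in M$ with $\tau \le r_x$. Since $r_x \in \tilde Q$ as well and $\tau$ is maximal, we must have $\tau = r_x$. Next I would prove the harder direction under hypothesis (G): that each $r_x$ is in fact maximal in $\tilde Q$. Suppose $r_x \le \tau$ for some $\tau \in \tilde Q$; we want $\tau = r_x$. Since $\tau \in \tilde Q$, there is $x' \in M$ with $\tau \le r_{x'}$, so $r_x \le \tau \le r_{x'}$, hence $r_x \le r_{x'}$. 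Condition (G) then forces $x = x'$, so $r_x \le \tau \le r_x$, giving $\tau = r_x$. This shows $R(M)$ is contained in the maximal elements; combined with the first assertion, which shows every maximal element is of the form $r_x$, we get that $R(M)$ is exactly the set of maximal elements of $\tilde Q$.

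The main obstacle I anticipate is not in the lattice-theoretic bookkeeping, which is essentially the argument above, but in making sure the characterization $\tilde Q = \bigcup_{x \in M}\{\tau;\ \tau \le r_x\}$ is airtight — specifically that the infimum defining $r_{\p M, \tau}$ is attained (so that $r_{\p M,\tau}(x_0) \ge 0$ genuinely yields $d(x_0,y) \ge \tau(y)$ pointwise), which follows from compactness of $\p M$ and continuity of $d(x_0, \cdot)$ and $\tau$, exactly as in the proof of Lemma \ref{lem:characterization_of_domi}. One should also note that here $\tau$ ranges over $C(M)$ rather than $C(\bar\Gamma)$ for proper open $\Gamma$, so $\bar\Gamma = \p M$ and the two infima (over $\Gamma$ and over $\bar\Gamma$) coincide trivially; no density argument is needed. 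A small subtlety worth checking is that $x$ is allowed to be an interior point or a boundary point of $M$, but the boundary distance function $r_x$ is defined for all $x \in \overline M$ and the argument does not care which.
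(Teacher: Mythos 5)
Your proposal is correct and follows essentially the same route as the paper: both establish the characterization $\tilde Q = \{\tau \in C(\p M);\ \exists x \in M,\ \tau \le r_x\}$ (you via Lemma \ref{lem:characterization_of_domi}, the paper directly from the equivalence $\tau \le r_x \Leftrightarrow x \notin M^0(\tau)$) and then run the identical two-step lattice argument using maximality and condition (G). The only remark is that your worry about attainment of the infimum is unnecessary for the direction you need it: $\inf_{y}(d(x_0,y)-\tau(y)) \ge 0$ already implies $d(x_0,y) \ge \tau(y)$ for every $y$ without any compactness argument.
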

\begin{proof}
Let $x \in M$ and $\tau \in C(\p M)$. Then $\tau \le r_x$ if and only if $x \notin M^0(\tau)$.
In other words,
\begin{equation}
\label{eq:characterization_of_Q}
\tilde Q = \{ \tau \in C(\p M);\ \text{there is $x \in M$ such that $\tau \le r_x$} \}.
\end{equation}
Moreover, $r_x \in \tilde Q$ for all $x \in M$.
Indeed, $r_x$ is continuous and trivially $r_x \le r_x$.

Suppose that $\tau$ is a maximal element of $\tilde Q$. 
By (\ref{eq:characterization_of_Q}) there is $x \in M$ such that $\tau \le r_x$,
but $r_x \in \tilde Q$ and maximality of $\tau$ yields $\tau = r_x$

Let us now suppose that $(M,g)$ satisfies (G) and show that $r_x$ 
is a maximal element of $\tilde Q$.
Suppose that $\tau \in \tilde Q$ satisfies $r_x \le \tau$.
By (\ref{eq:characterization_of_Q}) there is $x' \in M$ such that $\tau \le r_x'$.
Hence 
\begin{equation*}
r_x \le \tau \le r_{x'},
\end{equation*}
and (G) yields that $x = x'$. 
Thus $\tau = r_x$ and  $r_x$ is a maximal element of $\tilde Q$.
\end{proof}

%Let us recall that the semilattice $Q(M)$ is defined in (\ref{eq:meet_semilattice_QM}).
\begin{lemma}
\label{lem:closure}
The set $\tilde Q$ is the closure of $Q(M)$ in $C(M)$.
\end{lemma}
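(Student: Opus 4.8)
I need to show that $\widetilde Q = \overline{Q(M)}$, where $Q(M) = \{\tau \in C(\partial M) : m_\tau < m_\infty\}$ and $\widetilde Q = \{\tau \in C(M) : M \setminus M^0(\tau) \neq \emptyset\}$. Wait, let me look more carefully. $\widetilde Q$ consists of functions on $\partial M$... actually the definition says $C(M)$ but then uses $M^0(\tau)$ which requires $\tau$ on $\partial M$. Let me assume $\tau \in C(\partial M)$. The characterization from Lemma \ref{lem:maximal_elements}, equation (\ref{eq:characterization_of_Q}), is $\widetilde Q = \{\tau \in C(\partial M) : \exists x \in M, \tau \le r_x\}$. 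So I need $\overline{Q(M)} = \bigcup_{x \in M}\{\tau \le r_x\}$... hmm, but that's the closure in $C(M)$ — here $C(M)$ presumably means $C(\partial M)$ or there's a notational subtlety. Actually, re-reading: $\overline{Q(M)}$ is "the closure of $Q(M)$ in $C(M)$" where $C(M)$ should be $C(\partial M)$ given the context, but earlier (\ref{eq:semilattice_QM_intro}) writes $\tau \in C(M)$ with $\tau \le r_x$. I'll proceed treating everything as functions on $\partial M$.

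**Step 1: $Q(M) \subseteq \widetilde Q$.** Suppose $\tau \in Q(M)$, i.e. $m_\tau < m_\infty$, so $m(M \setminus M(\partial M, \tau)) > 0$, hence $M \setminus M(\partial M, \tau) \neq \emptyset$, and a fortiori $M \setminus M^0(\tau) \neq \emptyset$. So $\tau \in \widetilde Q$. Then since $\widetilde Q$ is closed (Step 2), $\overline{Q(M)} \subseteq \widetilde Q$.

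**Step 2: $\widetilde Q$ is closed in $C(\partial M)$.** Using (\ref{eq:characterization_of_Q}): let $\tau_j \in \widetilde Q$, $\tau_j \to \tau$ uniformly. Pick $x_j \in M$ with $\tau_j \le r_{x_j}$. By compactness of $M$, pass to a subsequence $x_j \to x \in M$. Since $(M,g)$ is fixed and $d$ is continuous, $r_{x_j} \to r_x$ uniformly on $\partial M$ (distance to a point depends continuously on the point, uniformly since $\partial M$ compact). Then $\tau = \lim \tau_j \le \lim r_{x_j} = r_x$, so $\tau \in \widetilde Q$.

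**Step 3: $\widetilde Q \subseteq \overline{Q(M)}$.** This is the crux. Given $\tau \in \widetilde Q$, I must approximate it uniformly by elements of $Q(M)$. By (\ref{eq:characterization_of_Q}) there is $x_0 \in M$ with $\tau \le r_{x_0}$. The idea: for small $\epsilon > 0$, consider $\tau - \epsilon$. I claim $\tau - \epsilon \in Q(M)$ for all sufficiently small $\epsilon$, or at least that I can perturb $\tau$ slightly downward to land in $Q(M)$. The point is that $M^0(\partial M, \tau - \epsilon) \subseteq M^0(\partial M, \tau)$ and I want to show $M \setminus M(\partial M, \tau - \epsilon)$ has positive measure. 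Since $x_0 \notin M^0(\tau)$ (as $\tau \le r_{x_0}$ means $d(x_0,y) \ge \tau(y)$ for all $y$, so there's no $y$ with $d(x_0,y) < \tau(y)$), the point $x_0$ is "on or outside the boundary" of the domain of influence. Now $d(x_0, y) \ge \tau(y) > \tau(y) - \epsilon$ strictly for all $y$, which by the characterization in Lemma \ref{lem:characterization_of_domi} gives $r_{\partial M, \tau - \epsilon}(x_0) = \inf_y(d(x_0,y) - (\tau(y)-\epsilon)) \ge \epsilon > 0$, hence $x_0 \notin M(\partial M, \tau - \epsilon)$, and since this set is closed (Lemma \ref{lem:characterization_of_domi}) its complement is open and nonempty, so has positive measure. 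Therefore $m_{\tau - \epsilon} < m_\infty$, i.e. $\tau - \epsilon \in Q(M)$. Since $\tau - \epsilon \to \tau$ uniformly as $\epsilon \to 0$, we get $\tau \in \overline{Q(M)}$.

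**Main obstacle.** The delicate point is the uniform continuity $r_{x_j} \to r_x$ in Step 2 and ensuring the domains of influence behave monotonically; both follow cleanly from Lemma \ref{lem:characterization_of_domi} (Lipschitz continuity of $r_{\Gamma,\tau}$ and the closed-set characterization) together with compactness of $M$. A secondary subtlety is matching the notation $C(M)$ versus $C(\partial M)$; I will treat $\widetilde Q$ and $Q(M)$ as subsets of $C(\partial M)$ consistently with (\ref{eq:characterization_of_Q}).

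\begin{proof}
We use the characterization (\ref{eq:characterization_of_Q}) of $\widetilde Q$ established in the proof of Lemma \ref{lem:maximal_elements}.

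First we show $Q(M) \subseteq \widetilde Q$. If $\tau \in Q(M)$, then $m_\tau < m_\infty$, so $m(M \setminus M(\partial M, \tau)) > 0$ and in particular $M \setminus M^0(\partial M, \tau) \supseteq M \setminus M(\partial M, \tau)$ is nonempty. Thus $\tau \in \widetilde Q$.

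Next we show $\widetilde Q$ is closed in $C(\partial M)$. Let $(\tau_j)_{j=1}^\infty \subset \widetilde Q$ converge uniformly to some $\tau \in C(\partial M)$. By (\ref{eq:characterization_of_Q}) there is $x_j \in M$ with $\tau_j \le r_{x_j}$. Since $M$ is compact, passing to a subsequence we may assume $x_j \to x$ for some $x \in M$. For $y \in \partial M$ the triangle inequality gives $|d(x_j, y) - d(x, y)| \le d(x_j, x)$, so $r_{x_j} \to r_x$ uniformly on $\partial M$. Hence $\tau = \lim_j \tau_j \le \lim_j r_{x_j} = r_x$, so $\tau \in \widetilde Q$ by (\ref{eq:characterization_of_Q}). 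Consequently $\overline{Q(M)} \subseteq \widetilde Q$.

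Finally we show $\widetilde Q \subseteq \overline{Q(M)}$. Let $\tau \in \widetilde Q$. By (\ref{eq:characterization_of_Q}) there is $x_0 \in M$ with $\tau \le r_{x_0}$, that is, $d(x_0, y) \ge \tau(y)$ for every $y \in \partial M$. Fix $\epsilon > 0$ and set $\tau_\epsilon := \tau - \epsilon \in C(\partial M)$. Then for every $y \in \partial M$ we have $d(x_0, y) - \tau_\epsilon(y) = d(x_0, y) - \tau(y) + \epsilon \ge \epsilon$, so in the notation of Lemma \ref{lem:characterization_of_domi},
\begin{equation*}
r_{\partial M, \tau_\epsilon}(x_0) = \inf_{y \in \partial M} \bigl( d(x_0, y) - \tau_\epsilon(y) \bigr) \ge \epsilon > 0.
\end{equation*}
By (\ref{eq:closed_domi_r}) this means $x_0 \notin M(\partial M, \tau_\epsilon)$. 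By Lemma \ref{lem:characterization_of_domi} the set $M(\partial M, \tau_\epsilon)$ is closed, so $M \setminus M(\partial M, \tau_\epsilon)$ is a nonempty open subset of $M$ and therefore has positive Riemannian volume. Hence $m_{\tau_\epsilon} < m_\infty$, i.e. $\tau_\epsilon \in Q(M)$. Since $\tau_\epsilon \to \tau$ uniformly as $\epsilon \to 0$, we conclude $\tau \in \overline{Q(M)}$.

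Combining the three inclusions, $\overline{Q(M)} = \widetilde Q$.
\end{proof}
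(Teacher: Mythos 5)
Your proof is correct and follows essentially the same route as the paper: closedness of $\tilde Q$ via compactness of $M$ and the characterization (\ref{eq:characterization_of_Q}), plus density of $Q(M)$ via the downward shift $\tau - \epsilon$. The only cosmetic difference is in the density step, where the paper exhibits an explicit ball $B(x_0, r) \subset M \setminus M(\tau - \epsilon)$ using uniform continuity of the distance function, whereas you invoke the closedness of $M(\p M, \tau - \epsilon)$ from Lemma \ref{lem:characterization_of_domi} to conclude that its complement is a nonempty open set of positive measure.
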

\begin{proof}
Let us first show that $\tilde Q$ is closed.
Let $(\tau_j)_{j=1}^\infty \subset \tilde Q$ satisfy $\tau_j \to \tau$ in $C(\p M)$ as $j \to \infty$.
By (\ref{eq:characterization_of_Q}) there is $(x_j)_{j=1}^\infty \subset M$ such that $\tau_j \le r_{x_j}$.
As $M$ is compact there is a converging subsequence $(x_{j_k})_{k=1}^\infty \subset (x_j)_{j=1}^\infty$.
Let us denote the limit by $x$, that is, $x_{j_k} \to x$ as $k \to \infty$.
By continuity of the distance function,
\begin{equation*}
\tau(y) = \lim_{k \to \infty} \tau_{j_k}(y) \le \lim_{k \to \infty} r_{x_{j_k}}(y) = r_x(y), \quad y \in \p M.
\end{equation*}
Hence $\tau \in \tilde Q$ and $\tilde Q$ is closed.

Clearly $Q(M) \subset \tilde Q$ and it is enough to show that $Q(M)$ is dense in $\tilde Q$.
Suppose that $\tau \in \tilde Q$. Then there is $x_0 \in M$ such that $\tau \le r_{x_0}$.
Let $\epsilon > 0$. As $M \times \p M$ is compact and the distance function is continuous,
there is $r > 0$ such that
\begin{equation*}
\sup_{y \in \p M} |d(x, y) - d(x_0, y)| < \epsilon, \quad \text{when $d(x, x_0) < r$ and $x \in M$}.
\end{equation*}
Hence $\tau(y) - \epsilon \le r_{x_0}(y) - \epsilon < r_x(y)$ for all $y \in \p M$ and all $x \in B(x_0, r)$.
In other words, 
\begin{equation*}
B(x_0, r) \subset (M \setminus M(\tau - \epsilon)),
\end{equation*}
and this yields that $\tau - \epsilon \in Q(M)$. 
Functions $\tau - \epsilon$ converge to $\tau$ in $C(\p M)$ as $\epsilon \to 0$. Thus $\tau$ is in the closure of $A$.
\end{proof}

Lemmas \ref{lem:maximal_elements} and \ref{lem:closure} together prove Theorem \ref{thm:maximal_elements}.
Moreover, (\ref{eq:characterization_of_Q})
yields the equation (\ref{eq:semilattice_QM_intro})
in the introduction.

\begin{lemma}
\label{lem:simple_manifold}
If $(M,g)$ is simple or the closed half sphere, then (G) holds.
\end{lemma}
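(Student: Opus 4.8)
The plan is to prove condition (G) --- that $r_{x_1} \le r_{x_2}$ forces $x_1 = x_2$ --- for simple manifolds by a first-variation / shortest-path argument, and to handle the closed half sphere by an explicit (or near-explicit) computation. Recall that on a simple manifold any two points are joined by a unique minimizing geodesic, so the distance function is well-behaved and the exponential map from any boundary point is a diffeomorphism onto $M$ in suitable coordinates.

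\textbf{The simple case.} Suppose $r_{x_1} \le r_{x_2}$, i.e. $d(x_1, y) \le d(x_2, y)$ for every $y \in \p M$, and suppose for contradiction that $x_1 \ne x_2$. Let $\gamma : [0, \ell] \to M$ be the unit-speed minimizing geodesic from $x_1 = \gamma(0)$ to $x_2 = \gamma(\ell)$, with $\ell = d(x_1, x_2) > 0$. The idea is to extend $\gamma$ backwards past $x_1$: since $\p M$ is strictly convex and there are no conjugate points, the geodesic through $x_1$ in the direction $-\dot\gamma(0)$ hits the boundary at some point $y_0 \in \p M$ after a finite time, and the resulting path from $y_0$ to $x_1$ to $x_2$ is a concatenation of two minimizing geodesic segments meeting at $x_1$ without a corner. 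I would argue that this concatenated path is itself minimizing from $y_0$ to $x_2$ (strict convexity of the boundary prevents any competitor path from leaving and re-entering $M$ to gain length, and absence of conjugate points together with simple connectivity gives uniqueness of geodesics, so the broken-free concatenation is the minimizer). Then
\[
d(x_2, y_0) = d(y_0, x_1) + d(x_1, x_2) = d(x_1, y_0) + \ell > d(x_1, y_0) = r_{x_1}(y_0),
\]
so $r_{x_2}(y_0) > r_{x_1}(y_0)$, contradicting $r_{x_1} \le r_{x_2}$. Hence $x_1 = x_2$.

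\textbf{The closed half sphere.} Here $M = \{ x \in S^n ;\ x_{n+1} \ge 0 \}$ with the round metric and $\p M$ the equator. The distance $d(x, y)$ for $x \in M$, $y \in \p M$ is just the spherical distance (the minimizing great-circle arc between a point and an equatorial point stays in the closed upper hemisphere). One checks directly that for a fixed $x \in M$ the function $y \mapsto d(x, y)$ on the equator determines $x$: writing $x$ in terms of its "latitude" (distance to the north pole) and its projection direction onto the equatorial $S^{n-1}$, the boundary distance function $r_x$ attains its minimum over $y \in \p M$ at the equatorial point nearest $x$, and this minimum value together with the location of the minimizer recovers $x$ uniquely. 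Thus $r_{x_1} \le r_{x_2}$ already forces $r_{x_1} = r_{x_2}$ pointwise at the minimizing $y$ after comparing minima, and then equality of the two boundary distance functions, hence $x_1 = x_2$; I would spell this out with the explicit formula $\cos d(x,y) = \langle x, y\rangle$ on the sphere.

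\textbf{Main obstacle.} The delicate point is the backward-extension argument in the simple case: one must be sure the geodesic emanating from $x_1$ in direction $-\dot\gamma(0)$ actually reaches $\p M$ (rather than, say, running forever inside $M$) and that the broken concatenation $y_0 \to x_1 \to x_2$ is genuinely distance-minimizing in $(M,g)$ with boundary. Both facts rely essentially on the simplicity hypotheses --- strict convexity of $\p M$ guarantees geodesics exit the manifold and that minimizers do not touch the boundary tangentially, while the no-conjugate-points and simply-connected conditions make the exponential map a diffeomorphism and give uniqueness of minimizers. I expect the cleanest route is to invoke the standard structure theory of simple manifolds (e.g. that $\exp_p$ based at an interior or boundary point is a diffeomorphism onto $M$) rather than re-deriving these facts; the half-sphere is then handled separately precisely because it is a limiting, non-strictly-convex case where the same picture still works by hand.
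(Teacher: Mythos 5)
Your argument for the simple case extends the minimizing geodesic from $x_2$ through $x_1$ and past $x_1$ to a boundary point $y_0$, and concludes $d(x_2,y_0) = d(x_2,x_1) + d(x_1,y_0) > d(x_1,y_0)$, i.e.\ $r_{x_2}(y_0) > r_{x_1}(y_0)$. But this inequality is \emph{consistent} with the hypothesis $r_{x_1} \le r_{x_2}$, not in contradiction with it, so as written the proof establishes nothing. To contradict $r_{x_1}\le r_{x_2}$ you must produce a boundary point $y$ with $d(x_1,y) > d(x_2,y)$, which means extending the geodesic \emph{forward} past $x_2$: if $\gamma$ runs from $x_1$ through $x_2$ and exits $\p M$ at $y$ (strict convexity guarantees it exits), then minimality of $\gamma$ on a simple manifold gives $r_{x_1}(y) = d(x_1,x_2) + d(x_2,y) > d(x_2,y) = r_{x_2}(y)$. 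This is exactly the paper's argument; your supporting discussion (geodesics reach the boundary, the concatenation of minimizers meeting without a corner at an interior point is minimizing) is fine, but it is deployed on the wrong side. One should also dispose separately of the trivial case $x_2\in\p M$, where $r_{x_1}(x_2) > 0 = r_{x_2}(x_2)$ immediately.

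The half-sphere part is also not a proof as it stands. That the minimum of $r_x$ together with its minimizer determines $x$ shows only that $x\mapsto r_x$ is injective, which is strictly weaker than (G); and from $r_{x_1}\le r_{x_2}$ one only gets $\min r_{x_1}\le\min r_{x_2}$, with the minima possibly attained at different points, so ``equality at the minimizing $y$'' does not follow. (Moreover $r_x$ is constant when $x$ is the pole, so there is no distinguished minimizer.) Two clean fixes: either run the same forward-extension argument --- this is what the paper does, noting only that the extended geodesic is \emph{a} shortest path rather than \emph{the} shortest path, since the half sphere is not simple --- or use $\cos d(x,y)=\langle x,y\rangle$ to see that $r_{x_1}\le r_{x_2}$ forces $\langle x_1-x_2,y\rangle\ge 0$ for every equatorial $y$, hence (replacing $y$ by $-y$) $x_1-x_2$ is proportional to the polar axis, and the constraints $|x_i|=1$ and $(x_i)_{n+1}\ge 0$ then force $x_1=x_2$.
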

\begin{proof}
Let $x_1, x_2 \in M$ satisfy $x_1 \ne x_2$, and let us show that $r_{x_1} \nleq r_{x_2}$.
First, if $x_2 \in \p M$ then $r_{x_1}(x_2) > 0 = r_{x_2}(x_2)$.
Second, if $x_2 \in M^{\text{int}}$, then there
is the unique unit speed geodesic $\gamma$ and the unique point $y \in \p M$ such that 
$\gamma(0) = x_1$, $\gamma(s) = x_2$ and $\gamma(s') = y$,
where $0 < s < s'$.
As $\gamma$ is a shortest path
from $x_1$ to $y$ (the shortest path if $(M,g)$ is simple) and the shortest path from $x_2$ to $y$,
\begin{equation*}
r_{x_1}(y) = s' > s = r_{x_2}(y).
\end{equation*}
\end{proof}

The closed half sphere is not simple, since for a point on the boundary the corresponding 
antipodal point is a conjugate point.
Hence the manifolds satisfying (G) form a strictly larger class than the simple manifolds.

\vspace{1cm}
{\em Acknowledgements.}
The author would like to thank Y. Kurylev for useful discussions. 
The research was partly supported by Finnish Centre of Excellence in Inverse Problems Research,
Academy of Finland COE 213476,
and partly by Finnish Graduate School in Computational Sciences.

% jump bibliography %%%%%%%%% 
%\section*{References}
%\addcontentsline{toc}{section}{Bibliography}
%\bibliographystyle{amsalpha}

Electronic mail address of the author:
\url{lauri.oksanen@helsinki.fi}

\end{document}